\documentclass[11pt]{article}

\usepackage[left=30mm,right=30mm, top=30mm, bottom=30mm]{geometry}
\usepackage{graphicx,color}
\usepackage[utf8]{inputenc}
\usepackage{amsmath}
\usepackage{amssymb}
\usepackage{amsthm}
\usepackage{comment}
\usepackage{enumerate}
\usepackage{mathtools}
\usepackage{cases}
\usepackage{multicol}
\usepackage{tabularx}
\usepackage{authblk}
\usepackage{todonotes}
\usepackage[colorlinks]{hyperref}
\hypersetup{
    colorlinks=true,
    linkcolor=red,
    citecolor=red
}

\theoremstyle{plain}
\newtheorem{thm}{Theorem}[section]
\newtheorem{prop}[thm]{Proposition}

\newtheorem{cor}[thm]{Corollary}

\theoremstyle{definition}
\newtheorem{defi}[thm]{Definition}
\newtheorem{ex}[thm]{Example}

\theoremstyle{remark}
\newtheorem{remark}[thm]{Remark}

\newcommand{\C}{\mathbb C}
\newcommand{\R}{\mathbb R}
\newcommand{\N}{\mathbb N}
\newcommand{\X}{\mathcal{X}}
\newcommand{\Y}{\mathcal{Y}}
\newcommand{\Z}{\mathcal{Z}}

\newcommand{\dom}{\mathcal{D}}

\newcommand{\W}{\mathcal{W}}
\newcommand{\bem}{\begin{bmatrix}}
\newcommand{\enm}{\end{bmatrix}}
\newcommand{\e}{\mathrm{e}}
\renewcommand{\i}{\mathrm{i}}
\newcommand{\dx}[1][x]{\,\mathrm{d}#1}

\newcommand{\Real}{{\rm Re}}

\newcommand{\ran}{{\rm ran}\,}
\renewcommand{\ker}{{\rm ker}\,}

\newcommand{\inv}{^{-1}}
\newcommand{\ainv}{^{-\ast}}
\newcommand{\set}[1]{\mathopen{}\left\{#1\right\}\mathclose{} }
\renewcommand{\mid}{\,\middle|\,}

\title{On the Weierstraß form of infinite dimensional differential algebraic equations}

\author[1]{Mehmet Erbay}
\author[2]{Birgit Jacob}
\author[3]{Kirsten Morris}

\affil[1]{IMACM, Univ. of Wuppertal, Gaußstraße 20, 42119 Wuppertal, Germany, {\tt erbay@uni-wuppertal.de}}
\affil[2]{IMACM, Univ. of Wuppertal, Gaußstraße 20, 42119 Wuppertal, Germany, {\tt bjacob@uni-wuppertal.de}}
\affil[3]{Dept. of Applied Mathematics, Univ. of Waterloo, 200 University Avenue West, Waterloo, ON N2L 3G1, Canada, {\tt kmorris@uwaterloo.ca}}

\date{\today}

\begin{document}
\maketitle

\begin{abstract}
    The solvability for infinite dimensional differential algebraic equations possessing a resolvent index and a Weierstraß form is studied. In particular, the concept of integrated semigroups is used to determine a subset on which solutions exist and are unique. This information is later used for a important class of systems, namely, port-Hamiltonian differential algebraic equations.
\end{abstract}

\section{Introduction}
Linear differential algebraic equations (DAEs), sometimes also called \textit{descriptor systems} or \textit{implicit differential equations}, arise in various fields such as physics, engineering and economics. They can be written as
\begin{equation}\label{eqn:dae-1}
    \frac{d}{dt}Ex(t)=Ax(t), \quad t\geq 0.
\end{equation}
Here, for complex Hilbert spaces $\X$ and $\Z$, $E\colon\X\to\Z$ is a bounded operator (denoted by $E\in L(\X,\Z)$) and $A\colon\dom(A)\subseteq \X \to\Z$ is a closed and densely defined operator. We will often write $(E,A)$ to refer to \eqref{eqn:dae-1}. 

In comparison to ordinary differential equations, DAEs include both algebraic and differential constraints. Because of this, in general $E$ has a non-trivial kernel. In finite dimensions it is always possible to decouple the system into an algebraic and a differential part by transforming \eqref{eqn:dae-1} into a so called \textit{Weierstraß form} 
\begin{equation}\label{eqn:dae-2}
    \frac{d}{dt}\bem I & 0 \\ 0 & N \enm \bem x_1(t)\\ x_2(t)\enm = \bem J & 0 \\ 0 & I\enm \bem x_1(t) \\ x_2(t)\enm, \quad t\geq 0,
\end{equation}
where $J$ is in Jordan form and $N$ is nilpotent, as long as $(\lambda E-A)\inv \in L(\Z,\X)$ for some $\lambda \in \C$,
\cite[Ch.~2.1]{kunkel_differential-algebraic_2006}. In this context, the nilpotency degree of $N$ is called the \textit{nilpotency index} of $(E,A)$ \cite{erbay_index_2024, kunkel_differential-algebraic_2006}. For finite-dimensional spaces, this definition of index coincides with other common definitions of the index. In this context, investigation of the different index definitions can be found in \cite{gernandt_pseudo-resolvent_2023, trostorff_semigroups_2020, trostorff_higher_2018} for the \textit{resolvent index}, \cite{jacob_solvability_2022, sviridyuk_linear_2003} for the radiality index, \cite{erbay_index_2024} for a general comparison of different known indices in infinite-dimensions and \cite{kunkel_differential-algebraic_2006, mehrmann_index_2015} for the finite-dimensional case. 

However, in infinite-dimensions the existence of a Weierstraß form is not guaranteed, nor is there a general procedure for calculating it. In view of this difficulty, we will specify a condition under which such a form always exists, namely the existence of the \textit{radiality index}.
One of the main uses of the Weierstraß form is to analyse the well-posedness of the system, since the DAE is divided into an algebraic part and an ODE part, see \eqref{eqn:dae-2}. For the ODE part it is possible to use known solution methods. 
The solvability of infinite-dimensional DAEs has been intensively studied; see for example, \cite{gernandt_pseudo-resolvent_2023, jacob_solvability_2022, mehrmann_abstract_2023, melnikova_properties_1996, reis_controllability_2008, reis_frequency_2005, thaller_factorization_1996, thaller_semigroup_2001, trostorff_semigroups_2020, trostorff_higher_2018}. 
In \cite{thaller_factorization_1996} the splitting of $\X$ into $\ker E$ and $\ran E^\ast$ and the restriction of the DAE to the factorized space $\X / \ker E$ is studied, in \cite{jacob_solvability_2022, sviridyuk_linear_2003} and \cite{thaller_semigroup_2001} the splitting of the space, given that the radiality index exists, and the growth of the pseudo-resolvents $(\lambda E-A)\inv E$ and $E(\lambda E-A)\inv$ are analysed, in \cite{gernandt_pseudo-resolvent_2023} a more general observation of the pseudo-resolvent growth is provided and a useful dissipativity condition for solving the DAE is presented, in \cite{trostorff_semigroups_2020} and \cite{trostorff_higher_2018} with the use of Wong-sequences and the resolvent index a super-set of the solution space is determined, in \cite{reis_frequency_2005} a sufficient condition in terms of Hille-Yosida type resolvent estimates is offered, in \cite{melnikova_properties_1996} with the help of integrated semigroups the well-posedness is investigated and in \cite{mehrmann_abstract_2023} the solvability of a class of \textit{port-Hamiltonian DAEs}, is discussed.

As in \cite{melnikova_properties_1996}, we will study the well-posedness of \eqref{eqn:dae-1} using integrated semigroups. There is a strong connection between integrated semigroups and the growth rate of the resolvent $(\lambda E-A)\inv$, for $\lambda \in \rho(E,A)\neq \emptyset$, where
\begin{equation*}
    \rho(E,A)\coloneqq \set{\lambda \in \C \mid (\lambda E-A)\inv \in L(\X,\Z)}.
\end{equation*}
A densely defined linear operator $A$ generates an integrated semigroup if there exists an $n\in \N$, such that
\begin{equation}\label{eqn:int-semi}
    \left\Vert (\lambda I -A)\inv \right\Vert \leq C \left\vert \lambda\right\vert^{n-1}, \quad \lambda \in \C_{\Real\geq \omega},
\end{equation}
for some $C, \omega >0$ \cite[Thm.~4.8]{neubrander_integrated_1988}. For $n=0$ this matches the condition of a sectorial operator operator $A$, which is used for the generation of a analytic semigroup.
Notice, that \eqref{eqn:int-semi} coincides with the \textit{complex resolvent index} for $E=I$. Based on this, we build on existing solution methods for the abstract Cauchy problem $\frac{d}{dt}x(t)=Ax(t)$, $x(0)=x_0$ on the subspace $\dom(A^n) = \ran ((\lambda I-A)\inv)^n$ \cite[§1.3]{krejn_linear_1971} and extend these studies to the differential algebraic equation
\eqref{eqn:dae-1} with $x_0 \in \ran( (\lambda E-A)\inv E)^n$. For further details on integrated semigroups we refer to \cite{arendt_vector-valued_2011, neubrander_integrated_1988}.

We subsequently focus on a special class of systems, namely, \textit{port-Hamiltonian differential algebraic equations} (pH-DAEs). These systems provide a framework for modeling and analyzing energy-dissipating physical systems, including electrical circuits, mechanical systems, fluid dynamic, thermal systems or robotics and control systems (just to name a few). There are different ways to define and approach pH-DAEs, such as using relations \cite{gernandt_linear_2021} or Dirac-structures \cite{van_der_schaft_generalized_2018}. Here, we use an operator formulation, as in \cite{mehrmann_abstract_2023}. To be more specific, we consider
\begin{equation}\label{eqn:dae-3}
    \frac{d}{dt}Ex(t)=AQx(t),\quad t\geq 0
\end{equation}
with $E,Q\in L(\X,\Z)$, $Q$ invertible, $A\colon\dom(A)\subseteq \Z\to\Z$ closed, densely defined and dissipative and $E^\ast Q \geq 0$ non-negative, i.e.~$\langle E^\ast Q x,x\rangle\geq 0$ for all $x\in \X$. The Hamiltonian of the pH-DAE \eqref{eqn:dae-3} is given by $\langle E^\ast Q x,x\rangle$.
Additionally to this, we will assume that $\ran E$ is closed. This guarantees the dissipation of the energy
\begin{equation}
    \frac{d}{dt}\langle E^\ast Q x(t), x(t)\rangle \leq 0,
\end{equation}
along all classical solutions of \eqref{eqn:dae-3}. 

The paper is organized as follows. In \hyperref[section-2]{Section 2} a set of initial conditions for which \eqref{eqn:dae-1} has a solution is characterized. In \hyperref[section-3]{Section 3} the radiality index is defined and it is shown that if $\ran E$ is closed, existence of this index implies existence of a Weierstraß form. Several new results relating the radiality index and the degree of nilpotency are also obtained.
Furthermore, provided that the complex resolvent index exists, and the Weierstraß form exists with densely defined $A_1$, then $A_1$ generates an integrated semigroup.

In \hyperref[section-4]{Section 4} port-Hamiltonian DAEs (pH-DAEs) are formally described and their dissipativity is proven. Solvability of port-Hamiltonian DAEs is studied in \hyperref[section-5]{Section 5}. With the aid of the results earlier in the paper, it is shown that pH-DAEs have complex resolvent index at most 3. This extends a result that is known in finite-dimensions to the infinite-dimensional setting. Existence of solutions to pH-DAEs is then obtained, along with generation of an integrated semigroup.

\section{Existence of solutions on a subspace}\label{section-2}
Throughout this article, let $\X$ and $\Z$ be Hilbert spaces, and, if not mentioned otherwise, $A\colon \dom(A)\subseteq \X\to\Z$ is a closed an densely defined operator and $E\in L(\X,\Z)$. In this section, we prove that if that the complex resolvent index exists, solutions exist on a subspace. We first recall the definition of the complex resolvent index, see \cite{erbay_index_2024, trostorff_semigroups_2020}.

\begin{defi}
    The \textit{(complex) resolvent index} of \eqref{eqn:dae-1} is the smallest number $p=p_{\rm res}^{(E,A)}\in\N$ $(p=p_{\rm c,res}^{(E,A)})$, such that there exists a $\omega \in\R$, $C>0$ with $(\omega,\infty)\subseteq \rho(E,A)$ $(\C_{\Real>\omega}\subseteq \rho(E,A))$ and 
    \begin{equation}\label{eqn:res-defi}
        \Vert (\lambda E-A)\inv \Vert \leq C\vert\lambda\vert^{p-1},\quad \lambda\in(\omega,\infty) \; (\lambda \in \C_{\Real>\omega}).
    \end{equation}
\end{defi}

In the following we determine a set of initial conditions for which the differential algebraic equation \eqref{eqn:dae-1} has a classical solution.

In this context, we say that $x\colon \R_{\geq 0}\to\X$ is a \textit{classical solution} of \eqref{eqn:dae-1} for an initial condition $x_0\in \X$, if $x(\cdot)$ is continuous on $\R_{\geq 0}$, $Ex(\cdot)$ is continuously differentiable on $\R_{>0}$, $x(t)\in \dom(A)$ for all $t\geq 0$, $x(0)=x_0$ and \eqref{eqn:dae-1} holds, and we say that $x\colon \R_{\geq 0}\to \X$ is a \textit{mild solution}, if $Ex(\cdot)$ is continuous, and for almost all $t\geq 0$, it holds $\int_0^t x(s)\dx[s] \in \dom(A)$ with
    \begin{equation*}
        Ex(t)- Ex_0 = A\int_0^t x(s)\dx[s].
    \end{equation*}

The following theorem is a generalization of the solution theory of the abstract Cauchy problem $\frac{d}{dt} x(t)=Ax(t)$, $x(0)=x_0$ and extends the proofs of \cite[p.~34]{krejn_linear_1971} and \cite[Ch.~4, Thm.~1.2]{pazy_semigroups_1983} to the DAE case with the help of pseudo-resolvents.

\begin{thm}\label{thm:7}
    Assume, that $(E,A)$ has a complex resolvent index. Then, for every $x_0 \in \ran ((\mu E-A)\inv E)^{p_{\mathrm{c,res}}^{(E,A)}+2}$ there exists a unique classical solution $x\colon \R_{\geq 0}\to\X$ of \eqref{eqn:dae-1}.
\end{thm}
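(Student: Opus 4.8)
The plan is to carry the Laplace--inversion construction for the abstract Cauchy problem over to the pseudo-resolvent $S(\lambda):=(\lambda E-A)\inv E\in L(\X)$ and then to prove uniqueness by a Phragm\'en--Lindel\"of argument. Write $n:=p_{\mathrm{c,res}}^{(E,A)}$ and take the point $\mu$ in the range condition to satisfy $\Real\mu>\omega$ (the range $\ran((\mu E-A)\inv E)^{n+2}$ does not depend on the particular $\mu\in\rho(E,A)$, a standard property of pseudo-resolvents). The two facts I would use throughout are the pseudo-resolvent identity $S(\lambda)-S(\mu)=(\mu-\lambda)S(\lambda)S(\mu)$ and the growth bound $\|S(\lambda)\|\le C|\lambda|^{n-1}\|E\|$ for $\lambda\in\C_{\Real>\omega}$, the latter being immediate from the definition of the complex resolvent index.

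For existence, write $x_0=S(\mu)^{n+2}y$ and put $x_0':=S(\mu)^{n+1}y$, $x_0'':=S(\mu)^{n}y$. Applying the resolvent identity twice peels off the two slowly decaying leading terms,
\begin{equation*}
  S(\lambda)x_0=\frac{x_0}{\lambda-\mu}-\frac{x_0'}{(\lambda-\mu)^2}+\frac{S(\lambda)x_0''}{(\lambda-\mu)^2},
\end{equation*}
and the growth bound forces the remainder $S(\lambda)x_0''/(\lambda-\mu)^2$ to be $O(|\lambda|^{-3})$ along a vertical line $\Gamma=\{\omega'+is:s\in\R\}$ with $\omega<\Real\mu<\omega'$. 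I would then define, for $t>0$,
\begin{multline*}
  x(t):=\frac{1}{2\pi i}\int_{\Gamma}e^{\lambda t}S(\lambda)x_0\dx[\lambda]\\
       =e^{\mu t}x_0-t\,e^{\mu t}x_0'+\frac{1}{2\pi i}\int_{\Gamma}e^{\lambda t}\frac{S(\lambda)x_0''}{(\lambda-\mu)^2}\dx[\lambda],
\end{multline*}
the two explicit terms being the inverse transforms of the first- and second-order poles at $\mu$. The remainder integral converges absolutely and stays so after one $t$-differentiation (the extra factor $\lambda$ leaves $O(|\lambda|^{-2})$), so $Ex$ is continuously differentiable; closing $\Gamma$ to the right shows the remainder integral vanishes at $t=0$, whence $\lim_{t\downarrow0}x(t)=x_0$ and $x$ is continuous up to $0$ with $x(0)=x_0\in\ran S(\mu)\subseteq\dom(A)$. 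That $x$ solves \eqref{eqn:dae-1} I read off termwise from the operator identity $\lambda E(\lambda E-A)\inv-A(\lambda E-A)\inv=I$ on $\Z$: applying it to $Ex_0$, multiplying by $e^{\lambda t}/(2\pi i)$ and integrating gives $\frac{d}{dt}Ex(t)-Ax(t)=\frac{1}{2\pi i}\int_{\Gamma}e^{\lambda t}Ex_0\dx[\lambda]=0$ for $t>0$, the surviving integral again vanishing by closing to the right, while $x(t)\in\dom(A)$ follows from closedness of $A$ together with convergence of $\int_{\Gamma}e^{\lambda t}A(\lambda E-A)\inv Ex_0\dx[\lambda]$. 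This is exactly where the exponent $n+2$ enters: it supplies the two spare powers of decay needed both to differentiate $Ex$ and to pin the initial value.

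For uniqueness I reduce, by linearity, to showing that a classical solution $x$ with $x_0=0$ vanishes. Fixing $\lambda\in\C_{\Real>\omega}$ and setting $y(t):=(\lambda E-A)\inv Ex(t)$, the chain rule together with $(\lambda E-A)\inv A=\lambda(\lambda E-A)\inv E-I$ on $\dom(A)$ turns the DAE into the scalar-coefficient ODE $y'(t)=\lambda y(t)-x(t)$ with $y(0)=0$, so that $(\lambda E-A)\inv Ex(t)=-\int_0^t e^{\lambda(t-s)}x(s)\dx[s]$. Evaluating at $t=T$ yields, for every $\lambda\in\C_{\Real>\omega}$,
\begin{equation*}
  \int_0^T e^{-\lambda s}x(s)\dx[s]=-e^{-\lambda T}(\lambda E-A)\inv Ex(T).
\end{equation*}
The left-hand side is entire in $\lambda$, so $\phi(\lambda):=\int_0^T e^{\lambda(T-s)}x(s)\dx[s]$ is entire; it is bounded on $\{\Real\lambda\le\omega\}$, while on $\{\Real\lambda>\omega\}$ it equals $-(\lambda E-A)\inv Ex(T)$ and is therefore $O(|\lambda|^{n-1})$ by the resolvent bound. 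Hence $\phi$ is polynomially bounded on all of $\C$, so it is a polynomial by Liouville's theorem; since $\phi(\lambda)\to0$ as $\lambda\to-\infty$ along the reals, $\phi\equiv0$. Thus the finite Laplace transform of $x$ on $[0,T]$ vanishes for all $\lambda$, so $x\equiv0$ on $[0,T]$, and as $T$ was arbitrary, $x\equiv0$.

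The main obstacle is the existence half: making the contour estimates rigorous—absolute convergence of the remainder, differentiation under the integral, and above all the justification for interchanging the closed, unbounded operator $A$ with the improper integral—together with the clean extraction of the two leading terms so that the initial value is $x_0$ rather than the naive jump value. The uniqueness step is comparatively clean once the resolvent--Volterra identity is in hand, the only care being the Phragm\'en--Lindel\"of/Liouville bookkeeping.
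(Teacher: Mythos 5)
Your overall strategy for existence---invert the Laplace transform of $S(\lambda)x_0=(\lambda E-A)\inv Ex_0$ along a vertical line, using $x_0\in\ran S(\mu)^{n+2}$ to manufacture decay---is the paper's strategy, and your uniqueness argument starts from the same Volterra identity $S(\lambda)x(t)=-\int_0^t\e^{\lambda(t-s)}x(s)\dx[s]$. The uniqueness half is correct and finishes differently from the paper: you use a Liouville/polynomial-boundedness argument for the entire function $\lambda\mapsto\int_0^T\e^{\lambda(T-s)}x(s)\dx[s]$, whereas the paper lets $\Real\lambda\to\infty$ and invokes \cite[Ch.~4, Lem.~1.1]{pazy_semigroups_1983}; both work, and yours makes visible why a bound on a half-plane (the \emph{complex} index) rather than on a ray is what is being used.

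The existence half has a genuine gap. After peeling only twice you are left with $S(\lambda)x_0''/(\lambda-\mu)^2$, where $x_0''=S(\mu)^{n}y$ is just some fixed vector, and the only estimate you invoke is $\|S(\lambda)\|\le C|\lambda|^{n-1}\|E\|$. Applied to $x_0''$ this gives $O(|\lambda|^{n-3})$, not $O(|\lambda|^{-3})$: for $n\ge2$ the remainder is not even absolutely integrable on $\Gamma$, so the integral defining $x(t)$ is not justified, and already for $n=1$ the extra factor of $\lambda$ from $t$-differentiation destroys absolute convergence. The claim you want is true, but proving it requires applying the pseudo-resolvent identity until all $n+2$ factors of $S(\mu)$ are consumed, leaving a tail $(-1)^{n+2}S(\lambda)y/(\lambda-\mu)^{n+2}=O(|\lambda|^{-3})$ together with explicit terms $S(\mu)^{j}y/(\lambda-\mu)^{n+3-j}$; this full peeling is exactly what the paper's choice of integrand $R(\lambda)z_0/(\lambda-\mu)^{n+2}$ builds in from the outset. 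A second, related defect is the verification that $x$ solves the DAE: the ``surviving integral'' $\frac{1}{2\pi i}\int_\Gamma \e^{\lambda t}Ex_0\dx[\lambda]$ does not converge, and for $t>0$ one must close to the \emph{left} (where $\e^{\lambda t}$ decays), not the right; with your contour placement ($\Real\mu<\omega'$) closing left picks up residues at $\mu$, so the polar terms do not vanish and must instead cancel against the derivatives of $\e^{\mu t}x_0-t\e^{\mu t}x_0'$ via $\mu Ex_0-Ax_0=Ex_0'$ and $\mu Ex_0'-Ax_0'=Ex_0''$ (or one places $\mu$ to the right of the contour, as the paper does, so that closing left genuinely gives zero). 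Both defects are repairable, but as written the existence construction does not go through for $n\ge1$.
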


\begin{proof}
    \textit{Existence}: 
    Let $p =p_{\mathrm{c,res}}^{(E,A)}+2$ and, for the sake of simplicity, define $R(\mu) \coloneqq ((\mu E-A)\inv E)$ for $\mu \in \rho(E,A)$. By assumption, the complex resolvent index exists. Thus, there are $\omega,C >0$, such that 
    \begin{equation}\label{eqn:res1}
        \left\Vert R(\lambda) \right\Vert \leq C \left\vert \lambda\right\vert^{p_{\mathrm{c,res}}^{(E,A)}-1} \left\Vert E \right\Vert, \quad \lambda \in \C_{\Real \geq  \omega}.
    \end{equation}
    Let $x_0 \in \ran R(\mu)^p$. Thus, there exists a $z_0\in \X$ and a $\mu \in \C_{\Real > \omega}$ such that $x_0= (-1)^{p-1} R(\mu)^{p}z_0$
    and therefore
    \begin{equation}
        \left\Vert \frac{R(\lambda)}{(\lambda-\mu)^p}\right\Vert \leq \tilde C \frac{\left\vert \lambda \right\vert^{p-3}}{\left\vert \lambda -\mu\right\vert^p},
    \end{equation}
    for a $\tilde C>0$ and for all $\lambda \neq \mu$. Thus, the integral of $\e^{\lambda t}\frac{R(\lambda)z_0}{(\lambda-\mu)^p}$ along the imaginary axis at $\Real \,\lambda = \omega$ exists and, especially, the function 
    \begin{equation}\label{eqn:solution}
        x(t)\coloneqq 
        \frac{(-1)^p}{2\pi i} \int_{\omega-i\infty}^{\omega+i\infty} \e^{\lambda t} \frac{R(\lambda)z_0}{(\lambda-\mu)^p}\dx[\lambda], \quad t\geq 0,
    \end{equation}
    is continuously differentiable. Since $E$ is bounded, the function $Ex(t)$ becomes differentiable as well and
    \begin{align}
        \frac{d}{dt}Ex(t) &= \frac{(-1)^p}{2\pi i} \int_{\omega-i\infty}^{\omega+i\infty} \lambda \e^{\lambda t} \frac{E R(\lambda)z_0}{(\lambda-\mu)^p}\dx[\lambda] \\
        &= \frac{(-1)^p}{2\pi i} \int_{\omega-i\infty}^{\omega+i\infty} \e^{\lambda t} \frac{AR(\lambda)z_0}{(\lambda-\mu)^p}\dx[\lambda] + \underbrace{\frac{(-1)^p}{2\pi i} \int_{\omega-i\infty}^{\omega+i\infty} \e^{\lambda t} \frac{Ez_0}{(\lambda-\mu)^p}\dx[\lambda]}_{=0} = A x(t).\label{eqn:soltion2}
    \end{align}
    Here, we used that $\frac{Ez_0}{(\lambda-\mu)^p}$ converges uniformly to $0$ for $\lambda \to \infty$ and Jordan's Lemma to show that the second integral in \eqref{eqn:soltion2} vanishes.
    Furthermore, 
    \begin{equation*}
        x(0) = \frac{(-1)^{p}}{2\pi i} \int_{\omega -i \infty}^{\omega + i \infty} \frac{R(\lambda)z_0}{(\lambda-\mu)^p}\dx[\lambda].
    \end{equation*}
    Using again Jordan's Lemma and, additionally, the residue theorem, we derive
    \begin{align*}
        x(0) = \frac{(-1)^{p-1}}{(p-1)!} \lim_{\lambda \to \mu} \frac{d^{p-1}}{d \lambda^{p-1}} R(\lambda) z_0 = (-1)^{p-1} R(\mu)^p z_0 = x_0.
    \end{align*}
    Here, we used $\frac{d}{d\lambda} R(\lambda)^n z_0 = - n R(\lambda)^{n+1} z_0$ for all $n \in \N$ and $\lambda \in \rho(E,A)$ (\cite[Lem.~2.4.1]{sviridyuk_linear_2003}). Thus, $x$ is a classical solution of \eqref{eqn:dae-1}.
    
    \textit{Uniqueness:} We assume that $x$ is a solution with $x(0) = 0$ and prove that $x(t) = 0$ for all $t\geq 0$. Since $x$ is a solution of \eqref{eqn:dae-1}, we deduce
    \begin{equation*}
        \frac{d}{dt} R(\lambda) x(t) = (\lambda E -A)\inv A x(t) = \lambda R(\lambda) x(t) - x(t),
    \end{equation*}
    for an $\lambda \in \rho(E,A)$. The solution of this equation can be written as
    \begin{equation}\label{eqn:res2}
        R(\lambda) x(t) = \e^{\lambda t } x(0) - \int_0^t \e^{\lambda (t-\tau)}x(\tau)\dx[\tau] = -\int_0^t \e^{\lambda (t-\tau)}x(\tau)\dx[\tau].
    \end{equation}
    Let $\sigma>0$. Inequality \eqref{eqn:res1} then implies
    \begin{equation}\label{eqn:res3}
        \lim_{\Real \lambda \to \infty} \e^{-\sigma \lambda} \left\Vert R(\lambda) \right\Vert = 0.
    \end{equation}
    By \eqref{eqn:res2}, \eqref{eqn:res3} and the fact that a solution $x$ is bounded on $[0,t]$ we conclude
    \begin{align*}
        \lim_{\Real \lambda \to \infty} \bigg\Vert \int_0^{t-\sigma} & \e^{\lambda (t-\sigma-\tau)} x(\tau)\dx[\tau] \bigg\Vert \\
        &= \lim_{\Real \lambda \to \infty} \e^{-\sigma \Real \lambda} \left\Vert \int_0^{t-\sigma} \e^{\lambda (t-\tau)}x(\tau)\dx[\tau]\right\Vert \\
        &\leq  \lim_{\Real \lambda \to \infty} \e^{-\sigma \Real\lambda} \bigg\Vert \underbrace{\int_0^t \e^{\lambda (t-\tau)}x(\tau)\dx[\tau]}_{=R(\lambda) x(t)} \bigg\Vert + \underbrace{\e^{-\sigma \Real\lambda} \left\Vert \int_{t-\sigma}^t \e^{\lambda (t-\tau)}x(\tau)\dx[\tau]\right\Vert}_{\to0} = 0.
    \end{align*}
    Thus,
    \begin{equation*}
        \lim_{\Real \lambda \to\infty} \int_0^{t-\sigma} \e^{\lambda(t-\sigma-\tau)} x(\tau)\dx[\tau] = 0
    \end{equation*}
    and by \cite[Ch.~4, Lem.~1.1]{pazy_semigroups_1983} $x(\tau) = 0$ for all $0\leq \tau \leq t-\sigma$. Since $\sigma$ and $t$ were arbitrary, we deduce $x(\tau) = 0$ for all $\tau\geq 0$.
\end{proof}

\begin{remark}
\begin{enumerate}[\;\, a)]
    \item Of course it is also possible to formulate the initial condition $x(0)=x_0$ of \eqref{eqn:dae-1}
    as
    \begin{equation*}
        \begin{cases}
            \frac{d}{dt}Ex(t) &= Ax(t),\quad t\geq 0,\\
            Ex(0) &= x_0.
        \end{cases}
    \end{equation*}
    Accordingly, one must assume $x_0\in E(\ran ((\mu E-A)\inv E)^{p_{\mathrm{c,res}}^{(E,A)}+2})$ in Theorem \ref{thm:7}. 
    
    \item The term $p=p_{\mathrm{c,res}}^{(E,A)}+2$ in the proof of Theorem \ref{thm:7} is chosen in such a way that $\frac{R(\lambda)}{(\lambda-\mu)^p}$ falls quickly enough, such that the integral in \eqref{eqn:solution} not only exists, the function $x(t)$ also becomes continuously differentiable. This means that the function $x$ in \eqref{eqn:solution} is only continuous, if $x_0 \in \ran ((\mu E-A)\inv E)^{p_{\mathrm{c,res}}^{(E,A)}+1}$. However, in this case $x$ is a mild solution of \eqref{eqn:dae-1}, as one can compute
    \begin{align*}
        Ex(t)- Ex(0) &= -\frac{1}{2\pi i}\int_{\omega -i \infty}^{\omega + i \infty} \underbrace{(\e^{\lambda t }-1)}_{=\int_0^t \lambda \e^{\lambda s}\dx[s]} \frac{E R(\lambda)z_0}{(\lambda-\mu)^p}\dx[\lambda]\\
        &= \int_0^t \bigg( -\frac{1}{2\pi i}\int_{\omega -i \infty}^{\omega + i \infty} \lambda \e^{\lambda s} \frac{ER(\lambda)z_0}{(\lambda-\mu)^p}\dx[\lambda] \bigg)\dx[s]\\
        &= \int_0^t \bigg( -\frac{1}{2\pi i} \int_{\omega-i\infty}^{\omega+i\infty} \e^{\lambda s} \frac{AR(\lambda)z_0}{(\lambda-\mu)^p}\dx[\lambda] - \underbrace{\frac{1}{2\pi i} \int_{\omega-i\infty}^{\omega+i\infty} \e^{\lambda s} \frac{Ez_0}{(\lambda-\mu)^p}\dx[\lambda]}_{=0} \bigg)  \dx[s] \\
        &= \int_0^t Ax(s)\dx[s] = A \int_0^t x(s)\dx[s].
    \end{align*}

    \item In \cite[Thm.~5.7]{trostorff_semigroups_2020} the set of initial values
    \begin{equation}\label{trostorff}
        U\coloneqq\set{x_0 \in \X \mid \exists x\colon \R_{\geq 0}\to \X \text{ mild solution}}
    \end{equation}
    was analyzed in more detail. To be more precise, it was shown that $E\inv A$ generates a $C_0$-semigroup on $\overline{\ran ((\mu E-A)\inv E)^{p_{\rm c,res}^{(E,A)}+2}}$ if and only if $U$ is closed and for each $x_0 \in U$ the mild solution is unique. The assumption that $U$ is closed is a bit restrictive and it also requires information about $U$.
    
    \item In \cite[Section 8]{gernandt_pseudo-resolvent_2023} the existence of solutions on the subspace $\overline{\ran R(\lambda)^k}$, for $R(\lambda)\coloneqq (\lambda E-A)\inv E$, has been studied under slightly stronger conditions compared to the ones in Theorem \ref{thm:7}. More specifically, under the assumption that there exists $k\in \N$ with $k\geq 1$, $\omega\in \R$ and $M>0$, such that $(\omega,\infty)\subseteq \rho(E,A)$ and 
    \begin{equation*}
        \left\Vert R(\lambda)x\right\Vert \leq \frac{M}{\lambda -\omega} \left\Vert x\right\Vert, \quad \forall \lambda >\omega, \; x\in \ran R(\omega)^{k-1}
    \end{equation*}
    the following operator $A_R$ was defined through its graph
    \begin{equation*}
        {\rm{graph}}\, A_R = \set{ (R(\lambda) x, x+\lambda R(\lambda)x)\mid x\in \overline{\ran R(\lambda)^k}}.
    \end{equation*}
    Then, under the assumption that $A_R$ generates a $C_0$-semigroup, it was shown that for every $x_0\in \overline{\ran R(\lambda)^k}$ there exists a unique mild solution of \eqref{eqn:dae-1} and for every $x_0 \in R(\lambda)\overline{\ran R(\lambda)^k}$ there exists a classical solution of \eqref{eqn:dae-1}. As it was remarked by the authors, the generation of the $C_0$-semigroup can be achieved if 
    \begin{equation*}
        \left\Vert \lambda R(\lambda) x\right\Vert \leq M \left\Vert x\right\Vert \quad \forall \lambda \in \C_{\geq \omega}, \; x\in \ran R(\omega)^k
    \end{equation*}
    holds, which is a slightly stronger assumption compared to the complex resolvent index. In the latter case, we only have
    \begin{equation*}
        \left\Vert \lambda R(\lambda) R(\mu)^k x \right\Vert \leq M \left\Vert x\right\Vert \quad \forall \lambda \in \C_{\geq \omega},\; x\in \X,
    \end{equation*}
    for $k=p_{\rm c,res}^{(E,A)}$.
\end{enumerate}
\end{remark}

\section{Sufficient condition for a Weierstraß form}\label{section-3}
In this section we start with the analysis of a sufficient condition for the existence of a Weierstraß form. To do that, we recall the definition of the radiality index \cite{erbay_index_2024} or, more generally, of the radiality \cite{sviridyuk_linear_2003}. 

We call two differential-algebraic equations $\frac{d}{dt}Ex=Ax$ and $\frac{d}{dt}\tilde E \tilde x=\tilde A\tilde x$ defined on Hilbert spaces $\X$,$\Z$ and $\tilde \X$, $\tilde \Z$ \textit{equivalent}, denoted by $(E,A)\sim(\tilde E,\tilde A)$, if there are two bounded isomorphisms $P\colon \X\to\tilde \X$, $Q\colon \Z\to\tilde \Z$, such that $E=Q\inv \tilde E P$ and $A=Q\inv \tilde A P$. Based on that, we define the notion of a Weierstraß form as follows. 
\begin{defi}
    The system \eqref{eqn:dae-1} has a \textit{Weierstraß form}, if there exists a Hilbert space $\Y=\Y^1\oplus\Y^2$, such that
    \begin{equation}\label{eqn:dae-wform}
        (E,A) \sim \left(\begin{bmatrix}
            I_{\Y^1} & 0\\ 0 & N
        \end{bmatrix}, \begin{bmatrix}
            A_1 & 0 \\ 0 & I_{\Y^2}
        \end{bmatrix}\right),
    \end{equation}
    where $N\colon \Y^2\to\Y^2$ is a nilpotent operator, $A_1\colon\dom(A_1)\subseteq \Y^1\to\Y^1$ is a linear operator and $I_{\Y^i}$ indicates the identity operator on the associated subspace $\Y^i$, $i=1,2$. Furthermore, the nilpotency degree of $N$ is known as the \textit{nilpotency index} $p_{\rm nilp}^{(E,A)}$ \cite[Section 2]{erbay_index_2024}.
\end{defi}

\begin{defi}
    \begin{enumerate}[\;\, a)]
        \item The system \eqref{eqn:dae-1} has a \textit{(complex) radiality index}, if there exists a $p=p_{\rm rad}^{(E,A)}\in \N$ $(p=p_{\rm c, rad}^{(E,A)}\in \N)$ and $\omega, C>0$, such that $(\omega, \infty)\subseteq \rho(E,A)$ ($\C_{\Real>\omega}\subseteq\rho(E,A)$) and 
        \begin{align}\label{rad-index}
            \begin{split}
                \left\Vert \left((\lambda_0E-A)\inv E\cdot\ldots\cdot(\lambda_p E-A)\inv E\right)^n\right\Vert 
                \leq \frac{C}{\prod_{k=0}^{p}\vert\lambda_k-\omega\vert^n},\\
                \left\Vert \left(E(\lambda_0E-A)\inv \cdot\ldots\cdot E(\lambda_p E-A)\inv\right)^n \right\Vert 
                \leq \frac{C}{\prod_{k=0}^{p}\vert\lambda_k-\omega\vert^n},
            \end{split}
        \end{align}
        holds for all $\lambda_0,\ldots,\lambda_{p}\in (\omega,\infty)$ $(\lambda_0,\ldots,\lambda_{p}\in \C_{\Real>\omega})$ and $n=1$.
        \item The system \eqref{eqn:dae-1} is \textit{$p$-radial}, if it has radiality index $p$ and \eqref{rad-index} holds for all $n\in \N$.
    \end{enumerate}
\end{defi}

\begin{thm}\label{thm:1}
    Let $E\in L(\X,\Z)$ with closed range and $A\colon\dom(A)\subseteq\X\to\Z$ be a closed and densely defined operator. If $(E,A)$ has a radiality index, then it also has a Weierstraß form, which is unique up to isomorphisms.
\end{thm}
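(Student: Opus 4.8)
The plan is to follow the degenerate–semigroup strategy behind the radiality index (as in Sviridyuk--Fedorov) and build a pair of bounded projections that split $\X$ and $\Z$ into a differential and an algebraic part, from which the Weierstraß form can be read off. First I would introduce the two pseudo-resolvents $R_\lambda \coloneqq (\lambda E-A)\inv E \in L(\X)$ and $L_\lambda \coloneqq E(\lambda E-A)\inv \in L(\Z)$ for $\lambda \in \rho(E,A)$. A direct computation yields the pseudo-resolvent identity $R_\lambda - R_\mu = (\mu-\lambda)R_\lambda R_\mu$ (and its analogue for $L_\lambda$), together with the relations $(\lambda E-A)R_\lambda = E$, $E R_\lambda = L_\lambda E$, $A(\lambda E-A)\inv = \lambda L_\lambda - I$ and $(\lambda E-A)\inv A \subseteq \lambda R_\lambda - I$ on $\dom(A)$. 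In particular $\ker R_\lambda = \ker E$ is independent of $\lambda$.

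The key objects are $P_\lambda \coloneqq (\lambda R_\lambda)^{p+1}$ and $Q_\lambda \coloneqq (\lambda L_\lambda)^{p+1}$ with $p = p_{\rm rad}^{(E,A)}$. Taking all arguments in \eqref{rad-index} equal to $\lambda$ and $n=1$ gives $\Vert R_\lambda^{p+1}\Vert \le C\vert\lambda-\omega\vert^{-(p+1)}$, hence
\[
  \Vert P_\lambda\Vert = \vert\lambda\vert^{p+1}\,\Vert R_\lambda^{p+1}\Vert \le \frac{C\,\vert\lambda\vert^{p+1}}{\vert\lambda-\omega\vert^{p+1}},
\]
which stays bounded as $\lambda\to\infty$; the same holds for $Q_\lambda$. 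I would then show that the strong limits $P \coloneqq s\text{-}\lim_{\lambda\to\infty}P_\lambda$ and $Q \coloneqq s\text{-}\lim_{\lambda\to\infty}Q_\lambda$ exist: by the uniform bound it suffices to establish convergence on a dense subspace, which one extracts from the pseudo-resolvent identity. The resolvent identity also yields $P^2 = P$ and $Q^2 = Q$, so $P$ and $Q$ are bounded projections, and passing to the limit in $E R_\lambda = L_\lambda E$ gives the intertwining $EP = QE$.

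With the splittings $\X = \ran P \oplus \ker P$ and $\Z = \ran Q \oplus \ker Q$, I would verify that $E$ and $A$ are block diagonal, i.e. map each summand of $\X$ into the corresponding summand of $\Z$; for $A$ this uses the two domain identities above. On the differential block the closedness of $\ran E$ is exactly what forces $E_1 \coloneqq E|_{\ran P}$ to be a bounded bijection onto $\ran Q$ with bounded inverse; normalising by $E_1\inv$ turns this block into $(I,A_1)$ for a closed operator $A_1$ on $\ran P$. On the algebraic block the relation $P_\lambda|_{\ker P}\to 0$ together with the $(p+1)$-st power structure forces $A_2 \coloneqq A|_{\ker P}$ to be boundedly invertible and $N \coloneqq A_2\inv E|_{\ker P}$ to be nilpotent; normalising by $A_2\inv$ turns this block into $(N,I)$. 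Collecting the two normalisations into isomorphisms $\X \to \Y^1\oplus\Y^2$ and $\Z\to\Y^1\oplus\Y^2$ produces \eqref{eqn:dae-wform}. Uniqueness up to isomorphism then follows because $P$ and $Q$ were defined intrinsically, as strong limits of operators built only from $E$ and $A$; the decomposition is therefore independent of the chosen form, so any two Weierstraß forms restrict to isomorphic differential and algebraic parts.

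The step I expect to be the main obstacle is the existence of the strong limits defining $P$ and $Q$: the radiality estimate delivers only uniform boundedness of $(\lambda R_\lambda)^{p+1}$, and upgrading this to strong convergence (and thence to idempotency) is the technical heart of the argument. This is precisely where the pseudo-resolvent identity must be combined with the closedness of $\ran E$, the latter guaranteeing that $\ran P$ is closed and that $E$ restricts to an isomorphism on the differential part.
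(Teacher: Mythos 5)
Your proposal follows essentially the same route as the paper: the projections $P=\lim_{\lambda\to\infty}(\lambda(\lambda E-A)^{-1}E)^{p+1}$ and its left analogue on $\Z$, the induced splittings $\X=\X^1\oplus\X^2$, $\Z=\Z^1\oplus\Z^2$, the intertwining relations $EP=RE$ and $AP=RA$, and the use of the closed range of $E$ to make $E_1$ a bounded bijection onto $\Z^1$ are exactly the ingredients of the paper's argument. The facts you flag as the technical heart --- strong convergence of $(\lambda R_\lambda)^{p+1}$, complementarity of range and kernel, bounded invertibility of $A_2$ and nilpotency of $A_2^{-1}E_2$ --- are precisely the ones the paper imports from Sviridyuk--Fedorov (where the full multi-point radiality estimate, not only the equal-arguments specialization you invoke, is what is actually used), so your sketch is correct in outline but no more self-contained than the paper's proof.
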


\begin{proof}
    Let $p=p_{\rm rad}^{(E,A)}$ denote the radiality index. For arbitrary $\lambda_0,\ldots,\lambda_p$ define
    \begin{align*}
        \X^1 &= \overline{\ran (\lambda_0 E-A)\inv E \cdot\ldots\cdot (\lambda_p E-A)\inv E }^{\Vert \cdot\Vert_{\X}}, &
        \X^2 &= \ker (\lambda_0 E-A)\inv E \cdot\ldots\cdot (\lambda_p E-A)\inv E,\\
        \Z^1 &= \overline{\ran E(\lambda_0 E-A)\inv \cdot\ldots\cdot E (\lambda_p E-A)\inv}^{\Vert\cdot\Vert_{\Z}}, &
        \Z^2 &= \ker E (\lambda_0 E-A)\inv \cdot\ldots\cdot E(\lambda_p E-A)\inv.
    \end{align*}
    By \cite[Section 2.1, 2.2 \& 2.5]{sviridyuk_linear_2003} these spaces are independent of the choice of $\lambda_i$, $i\in\set{0,\ldots,p}$,
    \begin{align*}
        E_2 & \coloneqq E\vert_{\X^2} \in L(\X^2,\Z^2),\\
        A_2 & \coloneqq A\vert_{\dom(A_2)} \colon \dom(A_2)\subseteq \X^2\to\Z^2,\\
        \dom(A_2) & \coloneqq \dom(A)\cap\X^2
    \end{align*}
    is boundedly invertible operator with $A_2\inv \in L(\Z^2,\X^2)$, such that $A_2\inv E_2\in L(\X^2)$ and $E_2A_2\inv\in L(\Z^2)$ are nilpotent with degree less or equal $p+1$. Furthermore, there are two projections, namely
    \begin{align*}
        P\colon \X\to\X, \quad x\mapsto\lim_{\lambda\to\infty} (\lambda (\lambda E-A)\inv E)^{p+1} x
    \end{align*}
    onto $\X^1$ with $\ker P =\X^2$, $\ran P =\X^1$, $Px_1=x_1$ for all $x_1\in \X^1$ and
    \begin{align*}
        R\colon \Z\to\Z, \quad z\mapsto\lim_{\lambda\to\infty} (\lambda E (\lambda E-A)\inv )^{p+1} z
    \end{align*}
    onto $\Z^1$ with $\ker R = \Z^2$, $\ran R = \Z^1$, $Rz_1=z_1$ for all $z_1\in \Z^1$, such that $\X=\X^1\oplus\X^2$ and $\Z=\Z^1\oplus\Z^2$. Furthermore, $(\lambda E-A)\inv E x\in\dom(A)$ for all $x\in \dom(A)$ and thus, since $A$ is closed, $Px\in\dom(A)$. Hence,
    \begin{align}\label{APx=RAx}
        \begin{split}
            APx&= A \lim_{\lambda \to \infty} (\lambda (\lambda E-A)\inv E)^{p+1}x\\
            &=\lim_{\lambda \to \infty} A (\lambda (\lambda E-A)\inv E)^{p+1}x\\
            &= \lim_{\lambda \to \infty} (E\lambda (\lambda E-A)\inv)^{p+1} Ax\\
            &= RAx.
        \end{split}
    \end{align}
    Since $E$ is bounded we derive
    \begin{align}\label{EPx=REx}
        \begin{split}
            EPx&= E \lim_{\lambda \to \infty} (\lambda (\lambda E-A)\inv E)^{p+1}x\\
            &=\lim_{\lambda \to \infty} E (\lambda (\lambda E-A)\inv E)^{p+1}x\\
            &= \lim_{\lambda \to \infty} (E\lambda (\lambda E-A)\inv)^{p+1} Ex\\
            &= REx
        \end{split}
    \end{align}
    for all $x\in\X$. Now, we need to show that 
    \begin{align*}
        E_1 & \coloneqq E\vert_{\X^1}\in L(\X^1, \Z^1)\\
        \intertext{and}
        A_i & \coloneqq A\vert_{\dom(A_i)}\colon \dom(A_i)\subseteq \X^i\to\Z^i
    \end{align*}
    is closed and densely defined, whereby $\dom(A_i)\coloneqq \dom(A)\cap\Z^i$, $i\in\set{1,2}$. This was already proved for $p=0$ in \cite[Prop.~2.5]{jacob_solvability_2022} and under stronger radiality assumptions in \cite[Cor.~2.5.2, Thm.~2.5.3]{sviridyuk_linear_2003}. 
    Let $x\in \X^1$. Since $\X=\X^1\oplus\X^2$, $x\in \X^1$ if and only if $Px=x$. By \eqref{EPx=REx} we conclude
    \begin{equation*}
        Ex=EPx=REx \in \Z^1
    \end{equation*}
    and, since $E$ is bounded, the same yields for $E_1$.
    Analogously, using \eqref{APx=RAx} we have $Ax=APx=RAx\in \Z^1$ for all $x\in \dom(A_1)$. Now, let $x\in \X^1$. Thus, there exists a sequence $(x_n)_n\subseteq \dom(A)$ with $x_n\to x$ for $n\to\infty$. Since $Px_n\in\dom(A_1)$ and $Px_n\to Px=x$ for $n\to\infty$, one has $\overline{\dom(A_1)}=\X^1$. Similarly, using $I-P$ one can show $\overline{\dom(A_2)}=\X^2$. Since $A$ is closed and $A_i(\dom(A_i))\subseteq \X^i$, $A_i$ is closed as well, $i\in\{1,2\}$.
    
    What is left to show is, that $E_1$ is boundedly invertible. Since $E_1$ is bounded, $\ker E_1\subseteq \ker E\subseteq \X^2$ and $\X=\X^1\oplus\X^2$, it follows that $E_1$ is injective. What remains to show is the surjectivity of $E$, because then the assertion follows from the Closed Graph Theorem. Since the space $\Z^1$ does not depend on the choice of the $\lambda_i$, we can choose a $\lambda \in \rho(E,A)$, such that $\Z^1=\overline{\ran (E(\lambda E-A)\inv )^{p+2}}^{\Vert\cdot\Vert_{\Z}}$ \cite[Lem.~2.2.8]{sviridyuk_linear_2003}. Let $y\in \ran (E(\lambda E-A)\inv)^{p+2}$. Then there exists a $z\in\Z$ with 
    \begin{equation*}
        y = (E(\lambda E-A)\inv )^{p+2}z = E ((\lambda E-A)\inv E)^{p+1} (\lambda E-A)\inv z
    \end{equation*}
    and thus
    \begin{equation*}
        \ran (E(\lambda E-A)\inv)^{p+2} \subseteq E(\ran ((\lambda E-A)\inv E)^{p+1})\subseteq E(\X^1).
    \end{equation*}
    Since $\ran E$ is closed, the same yields for $E(\X^1) = \ran E_1 = \Z^1\cap \ran E$. Thus, the closure of $\ran(E(\lambda E-A)\inv)^{p+2}$ is still a subset of $E(\X^1)$. Since $\ran E_1\subseteq \Z^1 = \overline{\ran (E(\lambda E-A)\inv )^{p+2}}^{\Vert\cdot\Vert_{\Z}}$ one obtains the surjectivity of $E$. Define
    \begin{align*}
        \tilde P = \begin{bmatrix}P\\ I_{\X}-P\end{bmatrix} \in L(\X,\X^1\times \X^2), \quad \tilde R=\begin{bmatrix}R\\ I_{\Z}-R\end{bmatrix}\in L(\Z,\Z^1\times\Z^2).
    \end{align*}
    Then, 
    \begin{align*}
        \tilde P \inv = \begin{bmatrix}I_{\X^1} & I_{\X^2}\end{bmatrix} \in L(\X^1\times\X^2, \X), \quad \tilde R\inv = \begin{bmatrix}I_{\Z^1} & I_{\Z^2}\end{bmatrix} \in L(\Z^1\times\Z^2, \Z).
    \end{align*}
    Thus,
    \begin{subequations}
        \begin{align}
            (E,A) &\sim (\tilde R E \tilde P \inv, \tilde R A \tilde P \inv)\\
            &\sim \left( \begin{bmatrix} E_1 & 0 \\ 0 & E_2\end{bmatrix}, \begin{bmatrix} A_1 & 0 \\ 0 & A_2\end{bmatrix} \right)\\
            &\sim \left( \begin{bmatrix} I_{\Z^1} & 0 \\ 0 & E_2A_2\inv\end{bmatrix}, \begin{bmatrix} A_1E_1\inv & 0 \\ 0 & I_{\Z^2}\end{bmatrix} \right).\label{eqn:equiv-dae1}
        \end{align}
    \end{subequations}
    The uniqueness of the Weierstraß form follows from the uniqueness of the nilpotency index \cite[Prop.~6.2]{erbay_index_2024}, which exists since the nilpotency index of $(E,A)$ is at most $p_{\rm rad}^{(E,A)}+1$ \cite[Prop.~6.3]{erbay_index_2024}.
\end{proof}

\begin{remark}
    With \eqref{eqn:equiv-dae1} one can rewrite the DAE $\frac{d}{dt}Ex=Ax$ as follows
    \begin{equation}\label{eqn:wform-dEx=Ax}
        \frac{d}{dt}\begin{bmatrix} I_{\Z^1} & 0 \\ 0 & E_2A_2\inv\end{bmatrix} \begin{bmatrix}z_1(t) \\ z_2(t)\end{bmatrix} = \begin{bmatrix} A_1E_1\inv & 0 \\ 0 & I_{\Z^2}\end{bmatrix} \begin{bmatrix}z_1(t) \\ z_2(t)\end{bmatrix},
    \end{equation}
    on $\Z^1\times\Z^2$.
    Further, one can also show that
    \begin{equation*}
        (E,A)\sim \left( \begin{bmatrix} I_{\X^1} & 0 \\ 0 & A_2\inv E_2 \end{bmatrix}, \begin{bmatrix} E_1\inv A_1 & 0 \\ 0 & I_{\X^2}\end{bmatrix}\right)
    \end{equation*}
    and rewrite the DAE $E\frac{d}{dt}x = Ax$ in the following way
    \begin{align}
        \begin{bmatrix} I_{\X^1} & 0 \\ 0 & A_2\inv E_2\end{bmatrix} \frac{d}{dt} \begin{bmatrix}x_1(t) \\ x_2(t)\end{bmatrix} & = \begin{bmatrix} E_1\inv A_1 & 0 \\ 0 & I_{\X^2}\end{bmatrix} \begin{bmatrix}x_1(t) \\ x_2(t)\end{bmatrix},\label{eqn:wform-Edx=Ax}
    \end{align}
    on $\X^1\times\X^2$.
\end{remark}

In the following, we will recall and generalize an example from \cite[Ex.~6.5]{erbay_index_2024}, which shows that the radiality index can be greater than $0$.

\begin{ex}
    Let $\W$ be a Hilbert space and $(A_0, \dom(A_0))$ generate a $C_0$-semigroup on $\W$. For $b\in \dom(A_0)$ and $c \in \dom(A_0^\ast)$ with $\langle b, c\rangle\neq 0$ define the operators $B_u=bu$, where $u\in \C$ and $Cz=\langle c,z\rangle$ for any $z\in \W$. We define the following DAE on $\Z=\W\times \C$ by 
    \begin{equation}\label{eq:zero-dyn}
        \frac{d}{dt}\underbrace{\bem I & 0\\ 0 & 0\enm}_{\eqqcolon E} x(t) = \underbrace{\bem A_0 & B\\ C & 0\enm}_{\eqqcolon A} x(t), \quad t\geq 0. 
    \end{equation}
    In \cite{erbay_index_2024} it was already proven that the resolvent index is at most $2$ and for specific $A_0$, $B$ and $C$ it was also shown that the radiality index is $1$. We want to generalize the last statement a bit. To do this, we look again at the left- and right-$E$-resolvents.

    We want to show that the radiality index exists by using the Weierstraß form. Define $Q_b z = z-\frac{\langle c,z\rangle }{\langle c, b\rangle} b$. 
    Then, $Q_b b=0$, $\ran Q_b=\ker C$ and $\W=\W^1\oplus\W^2 \coloneqq \ker C \oplus \mathrm{span}(b)$. 
    For more concise notation, we additionally define $\tilde C = \frac{1}{\langle c,b\rangle}C$, $\tilde B = \frac{1}{\langle c,b\rangle} B$ and $K\coloneqq \tilde C A_0$. Note that $Kz\coloneqq \tilde C A_0 z = \frac{1}{\langle c,b\rangle} \langle c, A_0 z \rangle = \frac{1}{\langle c,b\rangle} \langle A_0^\ast c,  z \rangle$ holds for all $z\in W$ and therefore $K$ is a bounded operator on $\W$.
    Thus, we can define the isomorphisms $U\colon \W \times \C \to \W^1 \times \W^2\times \C$ and $V\colon \W^1\times\W^2\times\C\to\W\times \C$ via
    \begin{align*}
        & & U &\coloneqq \bem Q_b & -Q_b A_0 \tilde B \\ 0 & \tilde B \\ \tilde C & -K\tilde B\enm, & V &\coloneqq \bem I_{\W^1} & I_{\W^2} & 0\\ -K & 0 & 1\enm, & &
        \intertext{with inverses}
        & & U\inv &\coloneqq \bem I_{\W^1} & A_0 & B\\ 0 & C & 0\enm, & V\inv &\coloneqq \bem Q_b & 0\\ I_{\W}-Q_b & 0\\ KQ_b & 1\enm. & &
    \end{align*}
    These mappings applied to \eqref{eq:zero-dyn} we derive
    \begin{align*}
        \tilde E \coloneqq U \bem I_{\W} & 0 \\ 0 & 0\enm V= \bem I_{\W^1} & 0\\ 0 & N \enm, \qquad \tilde A \coloneqq U \bem A_0 & B\\ C & 0\enm V = \bem Q_b A_0 & 0 \\ 0 & I_{\W^2\times \C} \enm,
    \end{align*}
    with $N=\bem 0 & 0 \\ \tilde C & 0 \enm$. Since $A_0$ is a generator of a $C_0$-semigroup, $Q_b A_0 z = A_0 z - \frac{\langle c, A_0 z\rangle}{\langle c,b\rangle} b = A_0 z - BK z$, $z\in \W^1$ and $BK$ is bounded, $Q_bA_0$ also generates a $C_0$-semigroup. Moreover, $(\tilde E, \tilde A)$ has a Weierstraß form as in \eqref{eqn:dae-wform} and together with Theorem \ref{thm:2} b) $(\tilde E, \tilde A)$ is $1$-radial.
\end{ex}

Further examples of DAEs with existing radiality index (such as the linearized Navier-stokes equation) can be found in \cite{Fedorov2009OnSO}.
Next, the radiality index for DAEs in Weierstraß form gets investigated.

\begin{thm}\label{thm:2}
    Assume, that $(E,A)= \left(\Big[\begin{smallmatrix}
        I_{\Y^1} & 0\\ 0 & N
    \end{smallmatrix}\Big], \Big[\begin{smallmatrix}
        A_1 & 0\\ 0 & I_{\Y^2}
    \end{smallmatrix}\Big]\right)$, where $A_1$ and $N$ are defined as in \eqref{eqn:dae-wform}. Then
    \begin{enumerate}[\;\, a)]
        \item $(E,A)$ has radiality index $p_{\rm rad}^{(E,A)}=p\in\N$ if and only if $(I_{\Y^1}, A_1)$ has radiality index $p_{\rm rad}^{(E,A)}=p\in\N$ and $N$ has nilpotency degree smaller or equal to $p+1$.
        \item If $A_1$ generates a $C_0$-semigroup and $N$ has nilpotency degree smaller or equal to $p+1$, then $(E,A)$ is $p$-radial and, especially, $p_{\rm rad}^{(E,A)} = p$.
    \end{enumerate}
\end{thm}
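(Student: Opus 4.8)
The plan is to make the block structure of the $E$-resolvents explicit and to treat the two diagonal blocks separately. For the pair in question one has
\[
  \lambda E - A = \begin{bmatrix} \lambda I_{\Y^1} - A_1 & 0 \\ 0 & \lambda N - I_{\Y^2} \end{bmatrix},
\]
and, since $N$ is nilpotent of degree $\nu$ say, the block $\lambda N - I_{\Y^2}$ is boundedly invertible for every $\lambda$ with $(\lambda N - I_{\Y^2})^{-1} = -\sum_{j=0}^{\nu-1}\lambda^j N^j$. Hence $\rho(E,A)=\rho(A_1)$, so the condition $(\omega,\infty)\subseteq\rho(E,A)$ coincides with $(\omega,\infty)\subseteq\rho(A_1)$, and both $E$-resolvents are block diagonal with respect to the Hilbert-space direct sum $\Y=\Y^1\oplus\Y^2$. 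A direct computation gives
\[
  (\lambda E - A)^{-1}E = \begin{bmatrix} (\lambda I_{\Y^1} - A_1)^{-1} & 0 \\ 0 & S(\lambda) \end{bmatrix},\qquad S(\lambda):=(\lambda N - I_{\Y^2})^{-1}N = -\sum_{j=1}^{\nu-1}\lambda^{j-1}N^j,
\]
and $E(\lambda E - A)^{-1}$ has the same diagonal blocks because $N$ commutes with $(\lambda N - I_{\Y^2})^{-1}$. Products and $n$-th powers preserve this form, so the norm on the left of \eqref{rad-index} equals the maximum of the norms of the two diagonal blocks, the top block of $(\prod_{k=0}^p (\lambda_k E - A)^{-1}E)^n$ being exactly $(\prod_{k=0}^p (\lambda_k I_{\Y^1} - A_1)^{-1})^n$, i.e.\ the quantity governed by the radiality estimate for $(I_{\Y^1},A_1)$.

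Next I would analyse the bottom block $(\prod_{k=0}^p S(\lambda_k))^n$. Each factor $S(\lambda)$ carries $N$ as a left factor, so $\prod_{k=0}^p S(\lambda_k)$ is a polynomial in $N$ whose lowest-order term is $(-1)^{p+1}N^{p+1}$; consequently this product, and with it its $n$-th power, vanishes identically in $\lambda_0,\dots,\lambda_p$ if and only if $N^{p+1}=0$, that is, iff $\nu\le p+1$. When instead $\nu>p+1$ the block does not decay but grows: specialising to $\lambda_0=\dots=\lambda_p=t$ gives
\[
  (S(t))^{p+1} = (-1)^{p+1} N^{p+1}\Big(\sum_{i=0}^{\nu-2} t^i N^i\Big)^{p+1} = (-1)^{p+1}\sum_{m=0}^{\nu-p-2} r_m\, t^{m}\, N^{p+1+m},
\]
with positive integers $r_m$, whose leading coefficient is the nonzero multiple $(-1)^{p+1}r_{\nu-p-2}N^{\nu-1}$ of $N^{\nu-1}$. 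Hence $\|(S(t))^{p+1}\|$ grows like $t^{\nu-p-2}$, so that $\|(S(t))^{p+1}\|\,|t-\omega|^{p+1}\sim t^{\nu-1}\to\infty$, and the bound in \eqref{rad-index} at index $p$ fails. Thus the bottom block respects \eqref{rad-index} precisely when $\nu\le p+1$.

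Part a) now follows by combining the two blocks. If $(E,A)$ satisfies \eqref{rad-index} at index $p$, then both diagonal blocks obey the bound; by the previous paragraph the bottom block forces $\nu\le p+1$, whence it vanishes and the surviving top block shows that $(I_{\Y^1},A_1)$ satisfies the identical estimate at index $p$. Conversely, if $(I_{\Y^1},A_1)$ has radiality index $p$ and $\nu\le p+1$, the bottom block is zero, so the norm of each product equals that of its top block and is bounded by assumption; the right $E$-resolvent inequality is handled identically since its blocks coincide with the left ones. This yields the stated equivalence.

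For part b) I would exploit that a $C_0$-semigroup generator $A_1$ satisfies $(\lambda I_{\Y^1}-A_1)^{-1}=\int_0^\infty \e^{-\lambda t}T(t)\dx[t]$ with $\|T(t)\|\le M\e^{\omega t}$. Writing $(\prod_{k=0}^p (\lambda_k I_{\Y^1}-A_1)^{-1})^n$ as an $n(p+1)$-fold integral and collapsing the time variables by $T(a)T(b)=T(a+b)$ yields
\[
  \Big\|\Big(\prod_{k=0}^p (\lambda_k I_{\Y^1}-A_1)^{-1}\Big)^n\Big\| \le M\prod_{k=0}^p \frac{1}{(\lambda_k-\omega)^n},\qquad \lambda_k>\omega,
\]
for every $n\in\N$; that is, the top block satisfies the full $p$-radial estimate at index $p$. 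Since $\nu\le p+1$ makes the bottom block vanish, $(E,A)$ inherits \eqref{rad-index} for all $n$, so it is $p$-radial and in particular $p_{\rm rad}^{(E,A)}=p$. I expect the sharp norm estimate of the nilpotent block to be the main obstacle: one must show not merely that $\prod_{k=0}^p S(\lambda_k)$ fails to decay when $\nu>p+1$, but that it genuinely grows, which requires pinning down the leading surviving power $N^{\nu-1}$ in the expansion, verifying that it is not cancelled, and comparing its polynomial growth in $\lambda$ against the prescribed decay rate $\prod_k|\lambda_k-\omega|^{-1}$.
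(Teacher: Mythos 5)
Your argument is correct and follows essentially the same route as the paper's proof: identify $\rho(E,A)=\rho(A_1)$ via the Neumann-type inverse of $\lambda N-I_{\Y^2}$, reduce everything to the block-diagonal form of the $E$-resolvents, observe that the nilpotent block of a $(p+1)$-fold product vanishes exactly when $N^{p+1}=0$, and use the semigroup representation of the resolvent for part b). Your explicit growth estimate $\Vert (S(t))^{p+1}\Vert (t-\omega)^{p+1}\sim t^{\nu-1}\to\infty$ for $\nu>p+1$ supplies the necessity direction of part a) in detail where the paper merely asserts it, and your Laplace-transform derivation of the mixed-$\lambda$ product bound is the precise justification behind the paper's appeal to Hille--Yosida.
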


\begin{proof} 
    Let $N$ has nilpotency degree $k+1\in \N$ and $\lambda \in \rho(E,A)$. Then, $\lambda \in \rho(I_{\Y^1}, A_1)$. Conversely, if $\lambda \in \rho(I_{\Y^1}, A_1)$, then $(\lambda N-I_{\Y^2})\inv = -\sum_{l=0}^k (\lambda N)^l$. Thus, 
    \begin{equation}\label{eqn:res-eq}
        \rho(E,A)=\rho(I_{\Y^1},A_1) = \rho(A_1).
    \end{equation}
    In particular, for $\lambda \in \rho(E,A)$ we obtain 
    \begin{equation}\label{eqn:lres=rres}
        (\lambda E- A)\inv  E = \begin{bmatrix}
        (\lambda I_{\Y^1}-A_1)\inv & 0 \\ 0 & (\lambda N-I_{\Y^2})\inv N
        \end{bmatrix} = E(\lambda E- A)\inv
    \end{equation}
    and
    \begin{equation*}
        \prod_{l=0}^k (\lambda_l E-A)\inv E=\Big[\begin{smallmatrix}
            \prod_{l=0}^k(\lambda_l I_{\Y^1}-A_1)\inv & 0\\ 0 & 0 
        \end{smallmatrix}\Big]
    \end{equation*}
    for $\lambda_0,\ldots,\lambda_p\in\rho( E, A)$. In particular, we have
    \begin{equation}\label{eqn:prod-res-norm}
        \left\Vert \prod_{l=0}^k(\lambda_l I_{\Y^1}-A_1)\inv\right\Vert = \left\Vert \prod_{l=0}^k (\lambda_l E-A)\inv E\right\Vert.
    \end{equation}
    Thus, Part a) follows directly from \eqref{eqn:res-eq} and \eqref{eqn:prod-res-norm}. To show Part b) assume, that $A_1$ generates a $C_0$-semigroup and $N$ has a nilpotency degree smaller or equal to $p+1$. Using the Theorem of Hille-Yosida there exists $\omega, C>0$ such that $(\omega, \infty)\subseteq \rho(I_{\Y^1}, A_1) = \rho(A_1)$ and $\Vert ((\lambda I_{\Y^1}-A_1)\inv)^n\Vert\leq \frac{C}{(\lambda-\omega)^n}$ for all $\lambda\geq \omega$, $n\in \N$. Together with \eqref{eqn:lres=rres} and \eqref{eqn:prod-res-norm} we have
        \begin{equation*}
            \left\Vert \left(\prod_{l=0}^p (\lambda_l E-A)\inv E\right)^n\right\Vert 
            = \left\Vert \left( \prod_{l=0}^p E(\lambda_l E- A)\inv \right)^n\right \Vert 
            \leq \frac{C^{p+1}}{\left(\prod_{l=0}^{p+1}\lambda_l-\omega\right)^n} 
        \end{equation*}
        for all $\lambda_0,\ldots,\lambda_{p+1}\geq\omega$. Thus, $(E, A)$ is $p$-radial. 
\end{proof}

A direct consequence of Theorem \ref{thm:2} is the equality of different index-terms introduced in \cite{erbay_index_2024}. Here, we will denote the differentiation index, the chain index and the perturbation index by $p_{\rm diff}^{(E,A)}$, $p_{\rm chain}^{(E,A)}$ and $p_{\rm pert}^{(E,A)}$. 

\begin{cor}
    Assume, that $(E,A)$ has a Weierstraß form and nilpotency index $p_{\rm nilp}^{(E,A)}$, i.e., $(E,A)\sim \left(\Big[\begin{smallmatrix}
        I_{\Y^1} & 0\\ 0 & N
    \end{smallmatrix}\Big], \Big[\begin{smallmatrix}
        A_1 & 0\\ 0 & I_{\Y^2}
    \end{smallmatrix}\Big]\right)$. If $A_1$ generates a $C_0$-semigroup on $\Y^1$. Then, 
    \begin{align*}
        p_{\rm rad}^{(E,A)}+1 
        = p_{\rm res\vphantom{d}}^{(E,A)} 
        = p_{\rm nilp}^{(E,A)} 
        = p_{\rm diff}^{(E,A)} 
        = p_{\rm chain}^{(E,A)} 
        = p_{\rm pert}^{(E,A)}.
    \end{align*}
\end{cor}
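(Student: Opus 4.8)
The plan is to reduce the statement to the literal Weierstraß form and then extract the single new equality $p_{\rm rad}^{(E,A)}+1 = p_{\rm nilp}^{(E,A)}$ from Theorem~\ref{thm:2}, while the equalities among $p_{\rm nilp}^{(E,A)}$, $p_{\rm diff}^{(E,A)}$, $p_{\rm chain}^{(E,A)}$ and $p_{\rm pert}^{(E,A)}$ are imported from the index comparison in \cite{erbay_index_2024}. Since all indices occurring in the statement are invariant under the equivalence relation $\sim$ (part of the framework of \cite{erbay_index_2024}), I may assume without loss of generality that $(E,A)$ is literally in Weierstraß form, with $E = \bigl[\begin{smallmatrix} I_{\Y^1} & 0 \\ 0 & N\end{smallmatrix}\bigr]$, $A = \bigl[\begin{smallmatrix} A_1 & 0 \\ 0 & I_{\Y^2}\end{smallmatrix}\bigr]$ and $A_1$ generating a $C_0$-semigroup on $\Y^1$. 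Throughout, write $q := p_{\rm nilp}^{(E,A)}$ for the nilpotency degree of $N$.

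First I would settle the central identity. Applying Theorem~\ref{thm:2}~b) with $p = q-1$ -- whose hypotheses are met since $A_1$ generates a $C_0$-semigroup and $N$ has nilpotency degree $q = (q-1)+1$ -- gives that $(E,A)$ is $(q-1)$-radial with $p_{\rm rad}^{(E,A)} = q-1$. Hence $p_{\rm rad}^{(E,A)}+1 = q = p_{\rm nilp}^{(E,A)}$. The $C_0$-semigroup hypothesis is decisive here: via Hille--Yosida it forces the differential block to contribute radiality index $0$, so that the index of the whole pair is pinned to the nilpotency degree of $N$.

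Next I would handle $p_{\rm res}^{(E,A)}$ by a direct resolvent estimate. By \eqref{eqn:res-eq} we have $\rho(E,A) = \rho(A_1)$, and from the block structure $(\lambda E - A)\inv = \bigl[\begin{smallmatrix}(\lambda I_{\Y^1}-A_1)\inv & 0 \\ 0 & (\lambda N - I_{\Y^2})\inv\end{smallmatrix}\bigr]$, so its norm equals the maximum of the two block norms. Hille--Yosida bounds the upper-left block by $M/(\lambda-\omega)$, whereas $(\lambda N - I_{\Y^2})\inv = -\sum_{l=0}^{q-1}(\lambda N)^l$ grows exactly like $|\lambda|^{q-1}$ as $\lambda \to \infty$, the leading term $\lambda^{q-1}N^{q-1}$ being nonzero precisely because $N$ has nilpotency degree $q$. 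Thus $\Vert (\lambda E - A)\inv\Vert \leq C|\lambda|^{q-1}$ with a sharp exponent, giving $p_{\rm res}^{(E,A)} = q$. The chain $p_{\rm nilp}^{(E,A)} = p_{\rm diff}^{(E,A)} = p_{\rm chain}^{(E,A)} = p_{\rm pert}^{(E,A)}$ is exactly the comparison of indices proved in \cite{erbay_index_2024} for pairs possessing a Weierstraß form, which I would invoke directly, completing the chain of equalities.

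The hard part is not the upper bounds but the sharpness bookkeeping: one must verify that the radiality and resolvent estimates genuinely fail for exponents below $q-1$ and $q$, respectively. This is exactly where the two standing hypotheses do the work -- the nilpotency degree being equal to $q$ (and not smaller) produces the genuine lower bound $|\lambda|^{q-1}$ coming from the $N$-block, while the $C_0$-semigroup property of $A_1$ guarantees that the $A_1$-block contributes no competing growth. Everything else is either an application of Theorem~\ref{thm:2} or a citation to \cite{erbay_index_2024}.
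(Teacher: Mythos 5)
Your proof is correct and follows essentially the same route as the paper, whose entire proof is a one-line appeal to Theorem~\ref{thm:2} and the index comparisons in \cite{erbay_index_2024}; you have simply spelled out the application of Theorem~\ref{thm:2}~b) with $p=q-1$ and added an explicit block-resolvent computation for $p_{\rm res}^{(E,A)}=q$ that the paper leaves to the citation. The extra sharpness discussion is a welcome clarification but does not constitute a different method.
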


\begin{proof}
    This follows from Theorem \ref{thm:2} and \cite{erbay_index_2024}.
\end{proof}

Next, we want to investigate the connection between integrated semigroups and linear differential algebraic equations. 

\begin{defi}
    A linear operator $A$ on a Banach space $\X$ is the \textit{generator of an $(n-1)$-times integrated semigroup} $(S(t))_{t\geq 0}$, if there exists an $n\in\N$, $M,\omega>0$ and a strongly continuous family of operator $(S(t))_{t\geq 0}$ in $ L(\X)$ with $\left\Vert S(t)\right\Vert \leq M\e^{\omega t}$ for all $t\geq 0$, $(\omega,\infty)\in \rho(A)$ and
    \begin{align*}
        (\lambda I_\X -A)\inv x= \lambda^{n-1} \int_0^\infty \e^{-\lambda t}S(t)x\dx[t]
    \end{align*}
    for $x\in \X$. 
\end{defi}

It should be clear that for $n=1$ this coincides with the more known notion of a $C_0$-semigroup. The importance of integrated semigroups becomes clear when one is interested in the well-posedness of the Cauchy problem
\begin{equation}\label{ACP}
    \begin{cases}
        \frac{d}{dt}x(t)=Ax(t),\quad t\geq 0,\\
        x(0)=x_0,
    \end{cases}
\end{equation}
for an $x_0\in\X$. Because, then for all $x_0\in \dom(A^n)$ the unique solution of \eqref{ACP} is given by
\begin{equation*}
    x(t)=S(t)A^{n-1}x_0 + \sum_{k=0}^{n-2} \frac{1}{k!} t^k A^k x_0
\end{equation*}
\cite[eq.~(4.1)]{neubrander_integrated_1988}. Furthermore, for all $x_0 \in \X$ the function $t\mapsto S(t)x$ is a solution of the $n$-times integrated Cauchy problem and, thus, a \textit{mild} solution of \eqref{ACP}.

With these information we can illustrate the relation between the complex resolvent of a DAE and the generator of an integrated semigroup, given that $(E,A)$ has a Weierstraß form.

\begin{thm}\label{thm:3} 
    Assume, that $(E,A)$ has a Weierstraß form as in \eqref{eqn:dae-wform}. If $A_1$ is densely defined and $(E,A)$ has a complex resolvent index $p_{\rm c, res}^{(E,A)}$, then $A_1$ generates an (at least) $p_{\rm c, res}^{(E,A)}+2$-times integrated semigroup. 
\end{thm}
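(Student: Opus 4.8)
The plan is to reduce the statement to the classical generation theorem for integrated semigroups applied to $A_1$ alone. Concretely, I would first show that $A_1$ inherits from the complex resolvent index of $(E,A)$ a polynomial resolvent bound of the type \eqref{eqn:int-semi}, and then invoke the generation result \cite[Thm.~4.8]{neubrander_integrated_1988} for the densely defined operator $A_1$.

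First I would use that both the resolvent set and the resolvent growth are invariant under equivalence of DAEs. Since $(E,A)$ has a Weierstraß form \eqref{eqn:dae-wform}, there are bounded isomorphisms $P,Q$ with $\lambda E-A=Q\inv(\lambda\tilde E-\tilde A)P$, where $\tilde E=\big[\begin{smallmatrix} I_{\Y^1} & 0\\ 0 & N\end{smallmatrix}\big]$ and $\tilde A=\big[\begin{smallmatrix} A_1 & 0\\ 0 & I_{\Y^2}\end{smallmatrix}\big]$. Hence $\rho(E,A)=\rho(\tilde E,\tilde A)$ and $(\lambda\tilde E-\tilde A)\inv=P(\lambda E-A)\inv Q\inv$, so $\Vert(\lambda\tilde E-\tilde A)\inv\Vert\leq\Vert P\Vert\,\Vert Q\inv\Vert\,\Vert(\lambda E-A)\inv\Vert$. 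As $(E,A)$ has complex resolvent index $p\coloneqq p_{\rm c,res}^{(E,A)}$, there are $\omega,C>0$ with $\C_{\Real>\omega}\subseteq\rho(E,A)$ and $\Vert(\lambda E-A)\inv\Vert\leq C|\lambda|^{p-1}$, and therefore the same polynomial bound, up to a constant, holds for $(\lambda\tilde E-\tilde A)\inv$ on $\C_{\Real>\omega}$.

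Next I would extract the $A_1$-block. Exactly as in the proof of Theorem \ref{thm:2}, on $\C_{\Real>\omega}\subseteq\rho(E,A)=\rho(\tilde E,\tilde A)=\rho(A_1)$ the resolvent of the Weierstraß form is block diagonal, $(\lambda\tilde E-\tilde A)\inv=\big[\begin{smallmatrix}(\lambda I_{\Y^1}-A_1)\inv & 0\\ 0 & (\lambda N-I_{\Y^2})\inv\end{smallmatrix}\big]$, with $(\lambda N-I_{\Y^2})\inv=-\sum_{l=0}^{k}(\lambda N)^l$ whenever $N$ has nilpotency degree $k+1$. Since the $(1,1)$-entry is the restriction of $(\lambda\tilde E-\tilde A)\inv$ to $\Y^1$, it satisfies $\Vert(\lambda I_{\Y^1}-A_1)\inv\Vert\leq\Vert(\lambda\tilde E-\tilde A)\inv\Vert\leq \tilde C|\lambda|^{p-1}$ on $\C_{\Real>\omega}$. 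Thus $A_1$ is densely defined by hypothesis, $\C_{\Real>\omega}\subseteq\rho(A_1)$, and its resolvent obeys precisely the estimate \eqref{eqn:int-semi} with exponent $p-1$.

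Finally I would feed this estimate into the integrated-semigroup machinery. Recovering the candidate family by the inverse Laplace transform $S(t)=\frac{1}{2\pi i}\int_{\gamma-i\infty}^{\gamma+i\infty}\e^{\lambda t}\lambda^{-m}(\lambda I_{\Y^1}-A_1)\inv\dx[\lambda]$ (so that $\int_0^\infty\e^{-\lambda t}S(t)\dx[t]=\lambda^{-m}(\lambda I_{\Y^1}-A_1)\inv$), the integrand is $O(|\lambda|^{p-1-m})$ along the contour, so the integral converges absolutely and defines an exponentially bounded, strongly continuous $(S(t))_{t\geq 0}$ as soon as $m>p$; density of $A_1$ then gives $S(0)=0$ and the integrated semigroup relation, so that $A_1$ generates an (at least) $(p+2)$-times integrated semigroup by \cite[Thm.~4.8]{neubrander_integrated_1988}. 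I expect the main obstacle to be precisely this last step of bookkeeping: verifying the full hypotheses of the cited generation theorem — the half-plane in $\rho(A_1)$, the exponential bound on $S$, and in particular matching conventions so that the exponent $p-1$ in the resolvent bound corresponds to the stated integration order $p+2$. The transfer of the estimate through the equivalence and the extraction of the $A_1$-block are, by contrast, routine once the block structure from Theorem \ref{thm:2} is in hand.
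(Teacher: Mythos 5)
Your proposal follows essentially the same route as the paper: transfer the half-plane resolvent bound through the equivalence defining the Weierstraß form, read off the estimate $\Vert(\lambda I_{\Y^1}-A_1)\inv\Vert\leq \tilde C|\lambda|^{p-1}$ from the block-diagonal resolvent (the identity $\rho(E,A)=\rho(A_1)$ and the norm comparison established for Theorem \ref{thm:2}), and then invoke Neubrander's generation theorem for the densely defined $A_1$. The paper is just as brief on the final bookkeeping step, citing \cite[Cor.~4.9]{neubrander_integrated_1988} rather than spelling out the inverse Laplace transform, so your sketch is a faithful match.
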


\begin{proof}
    Assume, that $(E,A)\sim \left(\Big[\begin{smallmatrix}
        I_{\Y^1} & 0\\ 0 & N
    \end{smallmatrix}\Big], \Big[\begin{smallmatrix}
        A_1 & 0\\ 0 & I_{\Y^2}
    \end{smallmatrix}\Big]\right)$ has a complex resolvent index $p=p_{\rm c,res}^{(E,A)}$. Then, by \eqref{eqn:res-eq} $\rho(E,A)=\rho(A_1)$ and
    \begin{equation}
        \Vert (\lambda I_{\Y^1}-A_1)\inv \Vert 
        \leq \tilde C \Vert (\lambda E-A)\inv \Vert 
        \leq C \Vert\lambda\Vert^{p-1}, \quad\lambda\geq\omega,
    \end{equation}
    for given $C, \tilde C>0$ and $\omega >0$. Thus, $(I_{\Y^1},A_1)$ has a complex resolvent index and by \cite[Cor.~4.9]{neubrander_integrated_1988} $A_1$ generates an (at least) $p+2$-times integrated semigroup.
\end{proof}

\begin{cor}
    Assume, that $(E,A)$ has a complex radiality index. Then $A_1E_1\inv$ and $E_1\inv A_1$ from \eqref{eqn:wform-dEx=Ax} and \eqref{eqn:wform-Edx=Ax} generate integrated semigroups.
\end{cor}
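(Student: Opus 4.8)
The plan is to apply Theorem \ref{thm:3} twice, once to each of the two Weierstraß forms \eqref{eqn:wform-dEx=Ax} and \eqref{eqn:wform-Edx=Ax}. Since the complex radiality bound \eqref{rad-index} holds for all $\lambda_k\in\C_{\Real>\omega}$, it holds in particular for $\lambda_k\in(\omega,\infty)$, so $(E,A)$ has a (real) radiality index; with $\ran E$ closed as in Theorem \ref{thm:1} this yields a Weierstraß form together with the two equivalent realizations \eqref{eqn:wform-dEx=Ax} and \eqref{eqn:wform-Edx=Ax}. Reading off \eqref{eqn:wform-dEx=Ax}, the pair $\left(\big[\begin{smallmatrix} I_{\Z^1}&0\\0&E_2A_2\inv\end{smallmatrix}\big],\big[\begin{smallmatrix} A_1E_1\inv&0\\0&I_{\Z^2}\end{smallmatrix}\big]\right)$ is a Weierstraß form in the sense of \eqref{eqn:dae-wform} whose $A_1$-block is $A_1E_1\inv$ on $\Z^1$ and whose nilpotent block is $E_2A_2\inv$; likewise \eqref{eqn:wform-Edx=Ax} exhibits $E_1\inv A_1$ on $\X^1$ as the $A_1$-block with nilpotent block $A_2\inv E_2$. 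It therefore suffices to verify the hypotheses of Theorem \ref{thm:3} for each realization.

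By Theorem \ref{thm:1}, $E_1\in L(\X^1,\Z^1)$ is boundedly invertible and $A_1$ is closed with $\overline{\dom(A_1)}=\X^1$. Hence $E_1\inv A_1$ has domain $\dom(A_1)$, dense in $\X^1$, and $A_1E_1\inv$ has domain $E_1(\dom(A_1))$, which is dense in $\Z^1$ because $E_1$ is a homeomorphism; so the density requirement of Theorem \ref{thm:3} is met in both cases. The remaining, and essential, hypothesis is that $(E,A)$ possess a complex resolvent index. As the resolvent index is invariant under the equivalence $\sim$, it coincides for $(E,A)$ and for both pairs above, so it is enough to produce it once.

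The crux is to pass from the complex radiality bound, which controls \emph{products} of pseudo-resolvents, to a polynomial bound on the \emph{single} resolvent. Writing $p=p_{\rm c,rad}^{(E,A)}$ and $\tilde A_1=A_1E_1\inv$, I would transport \eqref{rad-index} (with $n=1$) through the isomorphisms $\tilde P,\tilde R$ into the coordinates of \eqref{eqn:wform-dEx=Ax}; there each factor $(\lambda_k E-A)\inv E$ is block-diagonal with lower block $(\lambda_k E_2A_2\inv-I)\inv E_2A_2\inv$, which carries a factor of the nilpotent $E_2A_2\inv$, so the $(p{+}1)$-fold product over $k=0,\dots,p$ annihilates the lower block and leaves $\big\Vert\prod_{k=0}^p(\lambda_k I_{\Z^1}-\tilde A_1)\inv\big\Vert\le C\big/\prod_{k=0}^p|\lambda_k-\omega|$. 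Taking $\lambda_0=\dots=\lambda_p=\lambda$ gives $\Vert(\lambda-\tilde A_1)^{-(p+1)}\Vert\le C|\lambda-\omega|^{-(p+1)}$, and taking $\lambda_0=\lambda$, $\lambda_1=\dots=\lambda_p=\mu$ for a fixed $\mu>\omega$ bounds $\Vert(\lambda-\tilde A_1)\inv(\mu-\tilde A_1)^{-p}\Vert$. I would then iterate the resolvent identity to write $(\lambda-\tilde A_1)\inv=\sum_{j=0}^{p}(\mu-\lambda)^j(\mu-\tilde A_1)^{-(j+1)}+(\mu-\lambda)^{p+1}(\lambda-\tilde A_1)\inv(\mu-\tilde A_1)^{-(p+1)}$, where the finite sum is $O(|\lambda|^{p})$ with fixed operator coefficients and the last term is controlled by the product bound, yielding $\Vert(\lambda-\tilde A_1)\inv\Vert\le C'|\lambda|^{p+1}$ on a half-plane $\C_{\Real\ge\omega'}$. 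Since the nilpotent block contributes only $O(|\lambda|^{p})$, this gives the complex resolvent index of $(E,A)$, and Theorem \ref{thm:3} then shows that $A_1E_1\inv$ and $E_1\inv A_1$ each generate an integrated semigroup. The main obstacle is precisely this single-resolvent estimate: no submultiplicativity runs in the needed direction, so the resolvent-identity telescoping (or, alternatively, the index comparison in \cite{erbay_index_2024}) is what converts the product bound into the required decay.
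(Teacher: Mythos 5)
Your proposal is correct and follows the same skeleton as the paper's proof: pass from the complex radiality index to the (real) radiality index, invoke Theorem \ref{thm:1} to obtain the two realizations \eqref{eqn:wform-dEx=Ax} and \eqref{eqn:wform-Edx=Ax}, note that $A_1E_1\inv$ and $E_1\inv A_1$ are densely defined, and feed a complex resolvent index into Theorem \ref{thm:3}. The one place where you genuinely diverge is the step you correctly identify as the crux, namely that the complex radiality index implies the complex resolvent index: the paper disposes of this by citing \cite[Prop.~5.4]{erbay_index_2024} for the real case and asserting that the implication ``extends naturally'' to the complex half-plane, whereas you prove it directly by transporting the $n=1$ product bound \eqref{rad-index} into Weierstraß coordinates (where the $(p{+}1)$-fold product annihilates the nilpotent block) and then telescoping the resolvent identity against a fixed $\mu$. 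Your telescoping argument is sound; in fact, since $|\lambda-\omega|\geq|\lambda|/2$ for $|\lambda|$ large, the remainder term $(\mu-\lambda)^{p+1}(\lambda-\tilde A_1)\inv(\mu-\tilde A_1)^{-(p+1)}$ is $O(|\lambda|^{p})$, so you obtain a resolvent bound of order $|\lambda|^{p}$ rather than the $|\lambda|^{p+1}$ you state, i.e.\ complex resolvent index at most $p_{\rm c,rad}^{(E,A)}+1$ --- either way the hypothesis of Theorem \ref{thm:3} is met. What your version buys is a self-contained verification of the step the paper outsources to a citation; what the paper's route buys is brevity.
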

\begin{proof}
    In \cite[Prop.~5.4]{erbay_index_2024} it was shown that the existence of the radiality index already implies the existence of the resolvent index. This implication extends naturally to the complex resolvent and complex radiality index. Therefore, if $(E,A)$ has a complex radiality index, then by Theorem \ref{thm:1} $(E,A)\sim \left(\Big[\begin{smallmatrix}
        I_{\Z^1} & 0\\ 0 & E_2A_2\inv
    \end{smallmatrix}\Big], \Big[ \begin{smallmatrix}
        A_1E_1\inv & 0\\ 0 & I_{\Z^2}
    \end{smallmatrix}\Big]\right) \sim \left(\Big[\begin{smallmatrix}
        I_{\X^1} & 0\\ 0 & A_2\inv E_2
    \end{smallmatrix}\Big], \Big[ \begin{smallmatrix}
        E_1\inv A_1 & 0\\ 0 & I_{\X^2}
    \end{smallmatrix}\Big]\right)$ and by Theorem \ref{thm:3} $A_1E_1\inv$ and $E_1\inv A_1$ generate integrated semigroups.
\end{proof}

\section{Infinite dimensional port-Hamiltonian DAEs}\label{section-4}
Let $\X, \Z$ be Hilbert spaces, $E, Q\in L(\X,\Z)$ and $A\colon\dom(A)\subseteq \Z\to\Z$ be a closed and densely defined operator. Then, the following differential-algebraic equation 
\begin{equation}\label{eqn:dae-phs}
    \begin{cases}
        \frac{d}{d t} Ex(t) &=AQx(t), \quad t\geq 0,\\
        Ex(0) &=z_0,
    \end{cases}
\end{equation}
for a $z_0\in \Z$ is called a \textit{port-Hamiltonian differential-algebraic equation} (short pH-DAE), if $A$ is dissipative, $\ran E$ is closed, $Q$ is invertible and 
\begin{equation}\label{eqn:dae-eq=qe}
    E^\ast Q = Q^\ast E\geq 0
\end{equation}
holds. In this context, we call
\begin{equation}\label{hamiltonian}
    H(x)\coloneqq \langle Ex,Qx\rangle_\Z,\quad x\in \X
\end{equation}
the \textit{Hamiltonian} of $(E,AQ)$.
Note, that here $T\colon \X\to\X$ is called \textit{non-negative}, denoted by $T\geq 0$, if $\langle Tx,x\rangle_\X\geq 0$ for all $x\in \X$. 

These systems provide a framework for modeling and analyzing energy-dissipating physical systems, including electrical circuits, mechanical systems, fluid dynamic, thermal systems or robotics and control systems. In this Section we want to examine a few properties of pH-DAEs and show that the derivative of the Hamiltonian dissipates on all classical solutions of \eqref{eqn:dae-phs}. 

\begin{remark}\label{remark:1}
    \begin{enumerate}[a)]
        \item One can define a pH-DAE similarly for the case
        \begin{equation}\label{eqn:dae-phs2}
            \begin{cases}
                E\frac{d}{dt}x(t) &=AQx(t), \quad t\geq 0,\\
                x(0) &=x_0,
            \end{cases}
        \end{equation}
        for $x_0\in \X$. Hence, $x$ a \textit{classical solution} of \eqref{eqn:dae-phs2}, if $x(t)$ is continuously differentiable on $\R_{\geq 0}$, $x(t)\in\dom(AQ)$ for all $t\geq 0$ and \eqref{eqn:dae-phs2} holds.
        \item It can be observed that \eqref{eqn:dae-eq=qe} implies
        \begin{equation}\label{eqn:dae-eqinv=qinve}
            EQ\inv = Q\ainv E^\ast \geq 0.
        \end{equation}
        In fact, using \eqref{eqn:dae-eq=qe} we derive $EQ\inv = Q\ainv Q^\ast EQ\inv = Q\ainv E^\ast$, with $Q\ainv = (Q\inv)^\ast = (Q^\ast)\inv$. Applying the positivity of $EQ^\ast$ we have
        \begin{equation*}
            \langle Q\ainv E^\ast z, z\rangle_\Z = \langle Q\ainv E^\ast Q Q\inv z, z\rangle_\Z = \langle E^\ast Q (Q\inv z), (Q\inv z)\rangle_\X \geq 0, \quad z\in\Z.
        \end{equation*}
        \item \label{remark:Q=IandX=Z} Since $Q$ is invertible it is possible to assume $\Z=\X$ and $Q=I_\Z$, such that $E$ becomes non-negative and self-adjoint. This results from defining $z(t)\coloneqq Qx(t)$ in \eqref{eqn:dae-phs}, such that $\frac{d}{dt}Ex(t)=AQx(t)$ is equivalent to $\frac{d}{dt} EQ\inv z(t)=Az(t)$ and $EQ\inv$ being non-negative and self-adjoint as seen in \eqref{eqn:dae-eqinv=qinve}. Similarly, $E\frac{d}{dt}x(t)=AQx(t)$ is equivalent to $E\frac{d}{dt}Q\inv z(t)=Az(t)$.
        \item By c) \eqref{eqn:dae-phs} is equivalent to $\frac{d}{dt}EQ\inv z(t) = Az(t)$. Since $\ran E$ is closed, the same holds for $\ran EQ\inv$ and one can split up the space into $\Z=\ran EQ\inv \oplus \ker EQ\inv \eqqcolon \Z^1\oplus\Z^2$. Thus, $\frac{d}{dt}EQ\inv z(t) = Az(t)$ is equivalent to
        \begin{equation*}
            \frac{d}{dt}\bem E_1 & 0\\0 & 0\enm \bem z_1(t)\\ z_2(t)\enm = \bem A_1 \\ A_2\enm \bem z_1(t)\\ z_2(t)\enm, \quad t\geq 0,
        \end{equation*}
        whereby $A_i\colon \Z \to \Z^1$ and $E_1\coloneqq EQ\inv_{\ran EQ\inv}$. Thus, the representation for port-Hamiltonian differential algebraic equations chosen here corresponds with the notion of an abstract Hamiltonian differential-algebraic equation defined in \cite[Assumption 7]{mehrmann_abstract_2023}.
        Furthermore, it should be mentioned that there are more ways to define and approach port-Hamiltonian systems, such as using relations \cite{gernandt_linear_2021} or Dirac-structures \cite{van_der_schaft_generalized_2018}. 
    \end{enumerate}
\end{remark}

Next, we want to show that the derivative of the Hamiltonian dissipates on solutions, i.e. for a solution of \eqref{eqn:dae-phs}, $\frac{d}{dt}H(x(t))\leq 0$ for all $t\geq 0$. 

\begin{prop}\label{BTB=B}
    Let $B\in L(\Z)$ be a non-negative and self-adjoint operator on $\Z$ and assume that $\ran B$ is closed. Then there exists a $c>0$ and $T\in L(\Z)$ with $T\geq cI_\Z$ and 
    \begin{equation*}
        BTB=B.
    \end{equation*}
    Furthermore, $\langle \cdot, \cdot \rangle_T\coloneqq \langle T \cdot, \cdot \rangle$ induces a norm on $\Z$ equivalent to $\Vert \cdot\Vert_\Z$.
\end{prop}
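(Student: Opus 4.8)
The plan is to exploit the fact that a non-negative self-adjoint operator $B$ with closed range decomposes the space as $\Z = \ker B \oplus \overline{\ran B}$, and since $\ran B$ is closed this is $\Z = \ker B \oplus \ran B$ (using self-adjointness, $\ker B = (\ran B)^\perp$). The key observation is that on the subspace $\ran B$, the restriction $B|_{\ran B}$ is a boundedly invertible operator onto $\ran B$. This follows because $B|_{\ran B}$ is injective (its kernel is $\ker B \cap \ran B = \{0\}$), self-adjoint on the Hilbert space $\ran B$, and surjective onto $\ran B$; the Closed Graph Theorem (or the open mapping theorem) then gives a bounded inverse. Denote this inverse by $B_1\inv \in L(\ran B)$.

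First I would set up the orthogonal projection $P$ onto $\ran B$ (equivalently $I_\Z - P$ projects onto $\ker B$), which is bounded and self-adjoint because the decomposition is orthogonal. Then I would define $T$ by specifying it blockwise with respect to $\Z = \ker B \oplus \ran B$: on $\ran B$ let $T$ act as $(B|_{\ran B})\inv$ composed appropriately, and on $\ker B$ let $T$ act as the identity (or a positive multiple thereof). Concretely, a clean choice is
\begin{equation*}
    T \coloneqq (I_\Z - P) + P\,(B|_{\ran B})\inv P,
\end{equation*}
interpreting $(B|_{\ran B})\inv$ as a bounded self-adjoint positive operator on $\ran B$ extended by zero on $\ker B$. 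I would then verify $BTB = B$ directly: since $B$ annihilates $\ker B$ and maps into $\ran B$, we have $B = PB = BP$, so $BTB = B\,P(B|_{\ran B})\inv P\,B = B\,(B|_{\ran B})\inv\,B = B$, the last step using that $(B|_{\ran B})\inv$ inverts $B$ on $\ran B$.

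Next I would establish the strict lower bound $T \geq c I_\Z$. On $\ker B$ the operator $T$ acts as the identity, contributing the bound $1$. On $\ran B$, since $(B|_{\ran B})\inv$ is a positive self-adjoint bounded operator, it is bounded below by $1/\|B|_{\ran B}\| \geq 1/\|B\|$; call this $c_1 > 0$. Taking $c \coloneqq \min\{1, c_1\} > 0$ and using the orthogonality of the decomposition (so that $\langle Tx, x\rangle = \langle T(I-P)x, (I-P)x\rangle + \langle TPx, Px\rangle$ splits cleanly), I would conclude $\langle Tx, x\rangle \geq c\|x\|^2$ for all $x \in \Z$, giving $T \geq c I_\Z$. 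The equivalence of $\langle \cdot, \cdot\rangle_T$ with the original norm then follows immediately: the lower bound $T \geq c I_\Z$ gives one inequality, and $T \leq \max\{1, \|(B|_{\ran B})\inv\|\} I_\Z$ (since $T$ is bounded and self-adjoint) gives the other, so $\langle \cdot, \cdot\rangle_T$ is indeed an inner product inducing an equivalent norm.

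The main obstacle I anticipate is justifying that $B|_{\ran B}$ has a bounded inverse on $\ran B$, which is precisely where the closedness of $\ran B$ is essential. Without closed range, the inverse would only be densely defined and unbounded, and no bounded $T$ with $T \geq cI_\Z$ could exist. I would make this rigorous by noting that closedness of $\ran B$ makes $\ran B$ a Hilbert space in its own right, that $B$ restricted and corestricted to $\ran B$ is a bounded bijection (injectivity from $\ker B \perp \ran B$, surjectivity from $\ran(B|_{\ran B}) = B(\ran B) = B(\Z) = \ran B$, the middle equality because $B$ kills $\ker B$), and then invoking the bounded inverse theorem. Everything else is a routine but careful block-diagonal computation that I would keep brief.
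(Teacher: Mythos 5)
Your proof is correct and follows essentially the same route as the paper: both decompose $\Z$ orthogonally into $\ran B \oplus \ker B$, observe that $B$ restricted to $\ran B$ is a bounded positive self-adjoint bijection (hence boundedly invertible by the bounded inverse theorem, which is exactly where closedness of the range enters), and define $T$ as that inverse on $\ran B$ plus the identity on $\ker B$. Your verification of $BTB=B$, the lower bound $T\geq cI_\Z$ via the orthogonal splitting, and the norm equivalence match the paper's argument, with the explicit constant $c_1 = 1/\Vert B\Vert$ being a slightly more concrete touch than the paper's unspecified $c>0$.
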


Obviously, if $B$ is invertible then $T=B\inv$.

\begin{proof}
    Since $\ran B$ is closed and $B$ is self-adjoint, $\Z$ has an orthogonal decomposition into $\Z=\ran B\,\oplus\,\ker B$. Let $P_{\ran B}$ and $P_{\ker B}$ be the projections of $\Z$ onto $\ran B$ and $\ker B$. Define $\tilde B \colon \ran B \to \ran B$, $\tilde B(z) = Bz$. This is a bounded, positive selfadjoint operator on the Hilbert space $\ran B$, which is bijective (note that $\ker \tilde B = \ker B \cap \ran B = \{0\}$). 
    Hence, since $\tilde B^{-1}$ is bounded, $\tilde B$ is strictly positive with 
    \begin{equation*}
        \langle \tilde B z_1 , z_1 \rangle\geq c \|z_1\|^2, \quad z_1\in \ran B
    \end{equation*}
    for all $z_1 \in \ran B$, for some $c>0$.

    Let $\iota_{\ran B}\colon \ran B\to\Z$ be the embedding and define 
    \begin{equation*}
        T\colon \iota_{\ran B}\tilde B\inv \iota_{\ran B}^\ast + P_{\ker B}.
    \end{equation*}
    This operator satisfies $BTB=B$ and is bounded. Let $z= z_1+ z_2 \in \Z=\ran B \, \oplus \ker B$. Then
    \begin{align*}
        \langle  Tz, z \rangle &= \langle \tilde B  z_1 + z_2  , z_1 + z_2 \rangle \\
        &= \langle \tilde B z_1 , z_1 \rangle+ \langle z_2 ,z_2 \rangle\\
        &\geq c \|z_1\|^2 + \|z_2\|^2 .
    \end{align*}
    Thus, $T$ is strictly positive. The self-adjointness of $T$ follows from that of $B$, completing the proof.
\end{proof}

\begin{cor}\label{cor:1}
    Let $E,Q\in L(\X,\Z)$, whereby $E$ has a closed range, $Q$ is invertible and \eqref{eqn:dae-eq=qe} holds. Then, there exists a $c>0$ and $T\in L(\Z)$ with $T\geq cI_\Z$ and 
        \begin{equation}\label{ETE=EQ=QE}
            E^\ast TE = E^\ast Q = Q^\ast E.
        \end{equation}
        Furthermore, $\langle \cdot, \cdot\rangle_T \coloneqq\langle T\cdot, \cdot\rangle$ induces a norm on $\Z$ equivalent to $\Vert \cdot\Vert_\Z$.
\end{cor}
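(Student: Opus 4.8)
The plan is to reduce the claim to Proposition~\ref{BTB=B} applied to the single operator $B \coloneqq EQ\inv \in L(\Z)$. First I would verify that $B$ satisfies the hypotheses of that proposition. By \eqref{eqn:dae-eqinv=qinve} in Remark~\ref{remark:1}~b) one has $EQ\inv = Q\ainv E^\ast \geq 0$, so $B$ is non-negative, and the same identity $B = EQ\inv = Q\ainv E^\ast$ shows $B^\ast = Q\ainv E^\ast = B$, i.e.\ $B$ is self-adjoint. Since $Q\inv$ is a bounded bijection (as $Q$ is invertible), we have $\ran B = \ran(EQ\inv) = E(\ran Q\inv) = \ran E$, which is closed by assumption. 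Hence $B$ is a non-negative, self-adjoint operator with closed range, and Proposition~\ref{BTB=B} applies.

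Applying the proposition then yields a constant $c>0$ and an operator $T \in L(\Z)$ with $T \geq c I_\Z$ and $BTB = B$; moreover, the proposition simultaneously gives that $\langle \cdot,\cdot\rangle_T$ induces a norm on $\Z$ equivalent to $\Vert\cdot\Vert_\Z$, which already settles the final assertion of the corollary. It then remains only to deduce \eqref{ETE=EQ=QE} from $BTB = B$. Since $B = EQ\inv$, we may write $E = BQ$, and using self-adjointness $E^\ast = Q^\ast B^\ast = Q^\ast B$, so that
\begin{equation*}
    E^\ast T E = (Q^\ast B)\, T\, (B Q) = Q^\ast (BTB) Q = Q^\ast B Q = Q^\ast (EQ\inv) Q = Q^\ast E.
\end{equation*}
By \eqref{eqn:dae-eq=qe} we have $Q^\ast E = E^\ast Q$, which gives $E^\ast T E = E^\ast Q = Q^\ast E$ as required.

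The only genuine work lies in the verification step: confirming that $B = EQ\inv$ is self-adjoint, non-negative, and has closed range. All three follow directly from Remark~\ref{remark:1}~b) together with the invertibility of $Q$, so I do not expect a real obstacle there — the subtlety is merely to track that composing the closed-range operator $E$ with the boundedly invertible $Q\inv$ preserves closedness of the range and that $B$ coincides with its adjoint. Once $B$ is shown to be admissible, no further estimates are needed: both the operator identity and the equivalent-norm statement are inherited from Proposition~\ref{BTB=B}, and the passage from $BTB=B$ to $E^\ast T E = E^\ast Q$ is the one-line telescoping computation displayed above.
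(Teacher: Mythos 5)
Your proposal is correct and follows exactly the paper's route: the paper's proof is precisely the one-line reduction to Proposition \ref{BTB=B} with $B=EQ\inv$, using Remark \ref{remark:1} and the invertibility of $Q$, and your write-up just fills in the details (self-adjointness, non-negativity, closed range of $B$, and the telescoping identity $E^\ast TE = Q^\ast(BTB)Q = Q^\ast E$) that the paper leaves implicit.
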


\begin{proof}
    This follows from Remark \ref{remark:1}, Proposition \ref{BTB=B} with $B=EQ\inv$ and the fact that $Q$ is invertible.
\end{proof}

\begin{thm}\label{thm:6}
    Let $(E,AQ)$ be a pH-DAE with Hamiltonian $H$. Then, for all classical solutions $x\colon \R_{\geq 0}\to\X$ 
    \begin{equation*}
        \frac{d}{dt}\langle E x(t), Qx(t)\rangle\leq 0, \quad t\geq 0.
    \end{equation*}
\end{thm}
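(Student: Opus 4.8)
The plan is to re-express the Hamiltonian entirely in terms of $Ex(t)$, which is the only quantity guaranteed to be differentiable along a classical solution, and then to exploit the dissipativity of $A$ together with the closedness of $\ran E$. First I would invoke Corollary \ref{cor:1} to obtain a bounded, self-adjoint, strictly positive operator $T\in L(\Z)$ satisfying $E^\ast T E = E^\ast Q = Q^\ast E$. Using $\langle Ex, Qx\rangle_\Z = \langle x, E^\ast Q x\rangle_\X = \langle x, E^\ast T E x\rangle_\X = \langle T Ex, Ex\rangle_\Z$, the Hamiltonian along a solution becomes $H(x(t)) = \langle T Ex(t), Ex(t)\rangle_\Z$, which depends on $x$ only through $Ex$ and is real-valued.

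Next I would differentiate. Since $Ex(\cdot)$ is continuously differentiable and $T$ is bounded and self-adjoint,
\[
\frac{d}{dt} H(x(t)) = 2\,\Real\,\Big\langle T\,\tfrac{d}{dt}Ex(t),\, Ex(t)\Big\rangle_\Z = 2\,\Real\,\langle AQx(t),\, T Ex(t)\rangle_\Z,
\]
where I used the defining equation $\frac{d}{dt}Ex(t) = AQx(t)$. The goal is now to replace $TEx(t)$ by $Qx(t)$ inside the inner product, so that the dissipativity of $A$ can be applied to the pair $\big(AQx(t),\, Qx(t)\big)$, noting that $Qx(t)\in\dom(A)$ because $x(t)\in\dom(AQ)$.

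The key observation, and the step I expect to be the crux, is that the correction $TEx(t) - Qx(t)$ is orthogonal to the velocity $\frac{d}{dt}Ex(t)$. Indeed, $E^\ast\big(TEx(t) - Qx(t)\big) = E^\ast T E x(t) - E^\ast Q x(t) = 0$, so $TEx(t) - Qx(t) \in \ker E^\ast = (\ran E)^\perp$. On the other hand, $Ex(t)\in\ran E$ for every $t$ and $\ran E$ is a \emph{closed} subspace, so the difference quotients, and hence their limit $\frac{d}{dt}Ex(t) = AQx(t)$, all lie in $\ran E$. Therefore $\langle AQx(t),\, TEx(t) - Qx(t)\rangle_\Z = 0$, and the derivative simplifies to
\[
\frac{d}{dt} H(x(t)) = 2\,\Real\,\langle AQx(t),\, Qx(t)\rangle_\Z \leq 0,
\]
the final inequality being exactly the dissipativity of $A$ applied to $Qx(t)\in\dom(A)$. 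The main obstacle is thus not a hard estimate but the careful bookkeeping forced by the fact that $x(\cdot)$ itself need not be differentiable: everything must be routed through $Ex(\cdot)$, and the closedness of $\ran E$ is precisely what makes the unwanted cross term vanish.
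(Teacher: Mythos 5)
Your proposal is correct and follows essentially the same route as the paper: both invoke Corollary \ref{cor:1} to produce the operator $T$ with $E^\ast TE = E^\ast Q$, rewrite the Hamiltonian as $\langle TEx(t),Ex(t)\rangle$, differentiate, and reduce to the dissipativity of $A$ at $Qx(t)\in\dom(A)$. If anything, your treatment of the crucial substitution $TEx(t)\mapsto Qx(t)$ is more careful than the paper's: you explicitly show the cross term vanishes because $TEx(t)-Qx(t)\in\ker E^\ast=(\ran E)^\perp$ while $\frac{d}{dt}Ex(t)$ lies in the closed subspace $\ran E$, a point the paper's proof only justifies for the undifferentiated ``diagonal'' identity.
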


\begin{proof}
    By Corollary \eqref{cor:1} we know that $T\geq cI_Z$, such that \eqref{ETE=EQ=QE} holds. Let $x\colon \R_{\geq 0}\to\X$ be a classical solution of \eqref{eqn:dae-phs}. Then 
    \begin{align*}
        \frac{d}{dt}\langle Ex(t), Qx(t) \rangle_\Z &= \frac{d}{dt} \langle x(t), \underbrace{E^\ast Q}_{=E^\ast T E} x(t)\rangle_\Z \\
        &= 2\Real \langle Ex(t), \frac{d}{dt} TEx(t)\rangle_\Z \\
        &= 2\Real \langle Qx(t), \frac{d}{dt}Ex(t)\rangle_\Z\\
        &= 2\Real \langle Qx(t), AQx(t)\rangle_\Z \leq 0, \quad t\geq 0,
    \end{align*}
    where we used the continuity of $E$, $Q$ and $T$ in the second and third equation. Note, that we used $\langle Ex(t) = TEx(t)\rangle = \langle x(t), E^\ast T E x(t) \rangle = \langle x(t), Q^\ast Ex(t) \rangle = \langle Qx(t), Ex(t)\rangle$ in the third equality.
\end{proof}

\begin{remark}\label{remark:2}
    From the proof of Theorem \ref{thm:6} it becomes clear, that for all classical solutions $x$ of \eqref{eqn:dae-phs} the DAE is already dissipating, i.e.~one has
    \begin{equation*}
        \Real \langle E x(t), AQx(t)\rangle_T \leq 0, \quad t\geq 0.
    \end{equation*}
    It is possible to show the same for the adjoint system and, if the set containing all trajectories of the solutions of \eqref{eqn:dae-phs} is closed, one can apply \cite[Thm.~3.7]{jacob_solvability_2022} to gain a Weierstraß form on a subset. To be more precise, we know that $\ran E^\ast$ is closed and $Q\inv$ is invertible. Thus, there also exists a $S\in L(\X)$ with $S\geq \tilde c I_\X$, for a $\tilde c>0$, with
    \begin{equation*}
        ESE^\ast = EQ\inv = Q\ainv E^\ast
    \end{equation*}
    and $\langle\cdot,\cdot\rangle_{S\inv} = \langle S\inv \cdot, \cdot \rangle$ induces a norm on $\X$ equivalent to $\Vert \cdot \Vert_\X$. Define $\X_{S\inv}\coloneqq (\X, \langle\cdot,\cdot\rangle_{S\inv})$ and $\Z_T\coloneqq (\Z, \langle\cdot,\cdot\rangle_{T})$. Let $z\in \set{z\in\Z \,\middle|\, Tz\in\dom(A^\ast)}$ and $x\in \dom(AQ)$. Then, 
    \begin{equation*}
        \langle AQx,z\rangle_T = \langle x, SQ^\ast A^\ast Tz\rangle_{S\inv}.
    \end{equation*}
    In fact, by simple reformulations it is easy to see, that
    \begin{equation*}
        \dom((AQ)^{\ast}_{S\inv, T})=\set{z\in\Z \,\middle|\, \exists x^\ast \in \X, \forall x \in \dom(AQ)\colon \langle z,AQx\rangle_T=\langle x^\ast,x\rangle_{S\inv} },
    \end{equation*}
    where $(AQ)^{\ast}_{S\inv, T}$ denotes the adjoint of $AQ\colon \dom(AQ)\subseteq \X_{S\inv}\to\Z_T$.
    Consider now the \textit{adjoint-system} of \eqref{eqn:dae-phs}
    \begin{equation}\label{adjoint-system:1}
        \begin{cases}
            \frac{d}{dt} E^\ast z(t) &= Q^\ast A^\ast z(t), \quad t\geq 0,\\
            E^\ast z(0) &= x_0\in \X.
        \end{cases}
    \end{equation}
    Then, by choosing $z(t)=T\tilde z(t)$, this is equivalent to
    \begin{equation}\label{adjoint-system:2}
        \begin{cases}
            \frac{d}{dt} (E)^{\ast}_{S\inv, T} \tilde z(t) &= (AQ)^{\ast}_{S\inv, T} \tilde z(t), \quad t\geq 0,\\
            (E)^{\ast}_{S\inv, T} \tilde z(0) &= \tilde x_0\in \X.
        \end{cases}
    \end{equation}
    Let $z$ be a classical solution of \eqref{adjoint-system:2}. Assuming that $A^\ast$ is dissipative, one has
    \begin{align*}
        \Real\langle (E)^{\ast}_{S\inv, T} z(t), (AQ)^{\ast}_{S\inv, T} z(T)\rangle_{S \inv} &= \Real\langle E^\ast T z(t), SQ^\ast A^\ast Tz(T)\rangle_\X \\
        &= \Real\langle E^\ast T z(t), \frac{d}{dt} S E^\ast T z(T)\rangle_\X\\
        &= \Real\langle T z(t), \frac{d}{dt} E S E^\ast T z(T)\rangle_\X\\
        &= \Real\langle Q\inv T z(t), S\inv \frac{d}{dt} S E^\ast  T z(T)\rangle_\X\\
        &= \Real\langle Q\inv T z(t), S\inv S Q^\ast A^\ast  T z(T)\rangle_\X\\
        &= \Real\langle T z(t), A^\ast T z(T)\rangle_\X \leq 0, \quad t\geq 0.
    \end{align*} 
    Thus, if $U\coloneqq \{u\in\Z\,\vert\, \exists x$ classical solution of \eqref{eqn:dae-phs} $\exists t\geq 0\colon x(t)=u \}\subseteq \Z$ and $V \coloneqq \{v\in\Z\,\vert\, \exists z$ classical solution of \eqref{adjoint-system:2} $\exists t\geq 0\colon z(t)=v \}\subseteq \Z$ are both closed sets, one can apply \cite[Thm.~3.7]{jacob_solvability_2022} to $(E\vert_U, (AQ)\vert_U)$ on the spaces $\X_{S\inv}$ and $\Z_T$. 
\end{remark}

\begin{ex}
    Consider longitudinal vibrations in a viscoelastic nanorod. Let $l$ be the length of the nanorod, $N(x,t)$ be the resultant force of axial stress, $w(x,t)$ be the displacement of the nanorod in $x$ direction, $C$ be the elastic modulus, $D$ be the cross sectional area, $\mu$ be a nonlocal parameter, $\rho$ the mass density, $\tau_d$ the viscous damping and $a^2$, $b^2$ be the stiffness and damping coefficients of the light viscoelastic layer and consider the system introduced in \cite{karlicic_nonlocal_2015}. Consider
    \begin{subequations}
        \begin{align}
            \frac{\partial N(x,t)}{\partial x} &= a^2 w(x,t) + b^2 \frac{\partial w(x,t)}{\partial t} + \rho D \frac{\partial^2 w(x,t)}{\partial t^2},\\
            N(x,t)-\mu\frac{\partial^2 N(x,t)}{\partial x^2} &= CD\left(\frac{\partial w(x,t)}{\partial x} + \tau_d \frac{\partial^2 w(x,t)}{\partial x\partial t}\right),
        \end{align}
    \end{subequations}
    with boundary conditions
    \begin{equation}
        \frac{\partial w}{\partial t}(0,t) = \frac{\partial w}{\partial t}(l,t)=0.
    \end{equation}
    In \cite{heidari_port-hamiltonian_2019} the associated port-Hamiltonian system is given through
    \begin{equation}\label{eq:heidari-zwart}
        \underbrace{\begin{bsmallmatrix}
            1 & 0 & 0 & 0 & 0\\
            0 & 1 & 0 & 0 & 0\\
            0 & 0 & 1 & 0 & 0\\
            0 & 0 & 0 & 1 & 0\\
            0 & 0 & 0 & 0 & 0
        \end{bsmallmatrix}}_{=E}\frac{dz(x,t)}{dt} = \underbrace{\begin{bsmallmatrix}
            0 & 1 & 0 & 0 & 0\\
            -1 & -b^2 & 0 & 0 & \frac{\partial}{\partial x}\\
            0 & 0 & -CD\tau_d -\mu b^2 & -1 & 1\\
            0 & 0 & 1 & 0 & 0\\
            0 & \frac{\partial }{\partial x} & -1 & 0 & 0
        \end{bsmallmatrix}}_{=A} \underbrace{\begin{bsmallmatrix}
            a^2 & 0 & 0 & 0 & 0\\
            0 & \frac{1}{\rho D} & 0 & 0 & 0\\
            0 & 0 & \frac{1}{\mu \rho D} & 0 & 0\\
            0 & 0 & 0 & CD + \mu a^2 & 0\\
            0 & 0 & 0 & 0 & 1
        \end{bsmallmatrix}}_{=Q} z(x,t)
    \end{equation}
    and state 
    \begin{equation}
        z(x,t)=\begin{bmatrix}
            w(x,t)\\
            \rho D \frac{\partial w(x,t)}{\partial t}\\
            \mu \rho D \frac{\partial^2w(x,t)}{\partial x\partial t}\\
            \frac{\partial w(x,t)}{\partial x}\\
            N(x,t)
        \end{bmatrix}.
    \end{equation}
    Here, $E$ and $Q$ are bounded operator living on $\X=\Z=L^2((0,l); \R^5)$ and $A\colon\dom(A)\subseteq \X\to\X$ with 
    \begin{equation*}
        \dom(A)\coloneqq\set{ (z_1,\ldots,z_5)^T \in \X \mid z_2, z_5 \in H^1(0,l), z_2(0)=z_2(l)=0 }.
    \end{equation*}
    In \cite{heidari_port-hamiltonian_2019} the existence of solution was studied by reducing the system to a homogeneous port-Hamiltonian system (see \cite[Ch.~7]{jacob_linear_2012}). Here, we provide another approach by simply examining the dissipativity of the DAE as seen in Remark \ref{remark:2}.
    
   Since the various physical constants in $Q$ are positive, it is easy to show that $E^\ast Q$ is non-negative and self-adjoint and with
    \begin{align*}
        \langle Az,z\rangle_\X &= \int_0^l z_2(x)\frac{\partial z_5(x)}{\partial x} + z_5(x)\frac{\partial z_2(x)}{\partial x} - b^2 z_2^2(x) - (CD\tau_d + \mu b^2)z^2_3(x)\dx\\
        &= -b^2 \left\Vert z_2\right\Vert_\X^2 - (CD\tau_d + \mu b^2)\left\Vert z_3\right\Vert_\X^2 \leq 0, \quad z=(z_1,\ldots,z_5)\in \dom(A),
    \end{align*}
    $A$ is dissipative. Thus, $(E,AQ)$ fits into our definition of a port-Hamiltonian DAE. 
    
    Now, as in \cite[p.~452]{heidari_port-hamiltonian_2019} we impose the boundary conditions directly on the space and show, that \eqref{eq:heidari-zwart} has radiality index $0$. Define 
    \begin{equation*}
        \X_t\coloneqq \set{ (z_1,\ldots,z_5)^T \in L^2((0,l);\R^5) \mid z_2, z_5 \in H^1(0,l), z_2(0)=z_2(l)=0, \mu\frac{\partial z_2(x)}{\partial x} = z_3(x) }.
    \end{equation*} with the inner product $\langle \cdot , \cdot \rangle_{\tilde \X_t}\coloneqq \langle Q\cdot,\cdot\rangle_{\X_t}$. Since $Q$ is coercive this induces a norm equivalent to $\Vert\cdot\Vert_{\X}$. $\X_t$ is in fact closed \cite[Lem.~4.1]{heidari_port-hamiltonian_2019}. Define $\tilde \X \coloneqq (\X, \langle \cdot , \cdot \rangle_{\tilde \X_t})$. Then, 
    \begin{align}\label{eq:1}
        \Real \langle AQz, Ez\rangle_{\tilde \X_t} 
        &\leq \Real \int_0^l \frac{1}{\rho D} \left(z_2(x)\frac{\partial z_5(x)}{\partial x} + z_5(x)\frac{\partial z_2(x)}{\partial x} \right)\dx=0
    \end{align}
    holds for all $z=(z_1,\ldots,z_5)\in \dom(AQ)$. Furthermore, $(AQ)^\ast = A^\ast Q$ in $\X_t$ and, through similar calculations, one derives $\Real\langle A^\ast Q z, Ez\rangle_{\X_t}\leq 0$ for all $z\in\dom((AQ)^\ast)$. Then, by \cite[Thm.~3.6, 3.7]{jacob_solvability_2022} $(E,AQ)$ is $0$-radial and admits a decomposition as in \eqref{eqn:wform-dEx=Ax} on $\X_t = \X_t^1\oplus\X_t^2$ with $E_2(AQ)_2\inv = 0$ and $(AQ)_2\inv E_2=0$. In particular, $(AQ)_1E_1\inv$ generates a contraction semigroup on $\X_t^1$. 
\end{ex}

Note that in this example the operator $T$ from above is already given by $Q$, since $E^\ast Q E = E^\ast Q = Q^\ast E$.

By \cite[Thm.~3.7]{jacob_solvability_2022} it becomes clear that if $(E,AQ)$ is a pH-DAE with $\Real \langle AQx, Ex\rangle \leq 0$ and $\Real \langle (AQ)^\ast z, E^\ast z\rangle\leq 0$, then $(E,AQ)$ is in fact $0$-radial (and therefore has radiality index $0$), admits a Weierstraß form as in \eqref{eqn:wform-dEx=Ax} or \eqref{eqn:wform-Edx=Ax} and $(AQ)_1E_1\inv$ and $E_1\inv (AQ)_1$ generate contraction semigroups on $\tilde \Z^1$ and $\tilde \X^1 $. In this case, $(AQ)_1E_1\inv$ and $E_1\inv (AQ)_1$ also generate $C_0$-semigroups on $\Z^1$ and $\X^1$.

\section{Weierstraß form and solutions of port-Hamiltonian DAEs}\label{section-5}
It is possible to show that the (complex) resolvent index for pH-DAEs always exists and that it is bounded by $2$ ($3$). This is an already widely known result in finite dimensions and was shown for the infinite dimensional case in \cite{erbay_index_2024} for $\X=\Z$ and $Q=I$. 

\begin{prop}\label{res-leq-2}
    Let $E$, $Q$ and $A$ be defined as before, such that $(E,AQ)$ defines a pH-DAE. Assume that there exists a $\omega>0$ with $(\omega,\infty)\subseteq \rho(E,AQ)$ $(\C_{\Real >\omega}\subseteq \rho(E,AQ))$. Then the (complex) resolvent index exists and is at most $2$ (at most $3$).
\end{prop}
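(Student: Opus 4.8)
The plan is to reduce to the self-adjoint case and then separate the resolvent equation into a dissipative ``differential'' part on $\ran E$ and an ``algebraic'' part on $\ker E$, the first decaying and the second growing by a controlled power of $\lambda$. By Remark~\ref{remark:1} c) I would first pass to the equivalent pencil $(EQ\inv, A)$ on $\Z$; since $(\lambda E-AQ)\inv = Q\inv(\lambda EQ\inv - A)\inv$, it suffices to estimate $(\lambda EQ\inv-A)\inv$. Writing $E$ for $EQ\inv$ from now on, $E$ is non-negative, self-adjoint and, because $\ran EQ\inv$ is closed, has closed range; this is exactly the situation already handled for $\X=\Z$, $Q=I$, so the genuinely new content is the reduction. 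By Corollary~\ref{cor:1} (or Proposition~\ref{BTB=B}) the $T$-inner product is equivalent to $\langle\cdot,\cdot\rangle_\Z$, and $E=E^\ast\geq0$ with closed range yields the orthogonal splitting $\Z=\ran E\oplus\ker E$ on which $E$ is block-diagonal with a strictly positive, boundedly invertible part $E_1$ on $\ran E$ and zero on $\ker E$.

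Second, for $\lambda$ in the assumed resolvent set and $v\in\Z$, I set $w=(\lambda E-A)\inv v$, so $\lambda Ew-Aw=v$. Pairing with $w$, taking real parts, and using $\langle Ew,w\rangle\geq0$ together with the dissipativity $\Real\langle Aw,w\rangle\leq0$ of $A$ gives
\begin{equation*}
    \Real\lambda\,\langle Ew,w\rangle = \Real\langle v,w\rangle + \Real\langle Aw,w\rangle \leq \|v\|\,\|w\|.
\end{equation*}
Since $E_1$ is bounded below on $\ran E$, this controls the range component $w_1$ of $w$, namely $\|w_1\|^2\leq \frac{C_1}{\Real\lambda}\|v\|\,\|w\|$. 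This is the ``energy'' estimate coming directly from the Hamiltonian dissipation of Theorem~\ref{thm:6}.

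Third, I would recover the kernel component $w_2$ from the block form of $\lambda Ew-Aw=v$ relative to $\ran E\oplus\ker E$. Projecting onto $\ker E$ removes the term $\lambda Ew\in\ran E$ and leaves a purely algebraic relation for $w_2$; the assumption $\rho(E,A)\neq\emptyset$ forces the corresponding algebraic block to be invertible, and the pH-structure should bound the growth of its inverse by a single power of $|\lambda|$, i.e.
\begin{equation*}
    \|w_2\| \leq C_2\big(\|v\| + |\lambda|\,\|w_1\|\big).
\end{equation*}
Feeding the energy bound on $\|w_1\|$ into this estimate and solving the resulting quadratic inequality for $\|w\|$ gives
\begin{equation*}
    \|w\| \leq C\,\frac{|\lambda|^2}{\Real\lambda}\,\|v\|.
\end{equation*}
For real $\lambda$ this is $\|(\lambda E-A)\inv\|\leq C\lambda$, hence resolvent index at most $2$; for complex $\lambda$ with $\Real\lambda\geq\omega$ it is $\|(\lambda E-A)\inv\|\leq C|\lambda|^2$, hence complex resolvent index at most $3$.

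The main obstacle will be the algebraic step, i.e.~establishing the bound with exactly one power of $|\lambda|$. Because $A$ is unbounded and does not respect the splitting $\ran E\oplus\ker E$, the off-diagonal blocks and the inverse of the algebraic block are unbounded operators, so the recovery of $w_2$ must be justified on the appropriate domains; more importantly, the single power of $|\lambda|$ encodes the fact that the algebraic part of a pH-DAE has nilpotency degree at most $2$, and it is precisely here that the closed range of $E$ and the non-negativity $E^\ast Q=Q^\ast E\geq0$ must be used to exclude longer nilpotent chains, which would otherwise produce higher powers of $|\lambda|$ and a larger index.
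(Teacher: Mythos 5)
Your first paragraph is, in substance, the paper's entire proof: the paper passes to the pencil $(EQ\inv,A)$ via Remark~\ref{remark:1}~c), notes that $EQ\inv$ is non-negative, self-adjoint and has closed range, invokes the known bound for the case $\X=\Z$, $Q=I_\X$ from \cite[Thm.~3.3]{erbay_index_2024}, and transfers the estimate through $(\lambda EQ\inv-A)\inv=Q(\lambda E-AQ)\inv$ on $\rho(EQ\inv,A)=\rho(E,AQ)$. Had you stopped after the reduction and cited that result, you would have reproduced the intended argument exactly.

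Instead you attempt to reprove the reduced case, and that attempt has a genuine gap precisely where the content of the theorem sits. The energy estimate $\Vert w_1\Vert^2\leq \frac{C_1}{\Real\lambda}\Vert v\Vert\,\Vert w\Vert$ is correct, but the bound $\Vert w_2\Vert\leq C_2\bigl(\Vert v\Vert+|\lambda|\,\Vert w_1\Vert\bigr)$ is asserted, not derived. Projecting $\lambda Ew-Aw=v$ onto $\ker E$ leaves $-P_{\ker E}Aw_1-P_{\ker E}Aw_2=P_{\ker E}v$, and to recover $w_2$ you need the block $P_{\ker E}A\vert_{\ker E}$ (or the relevant Schur complement) to be boundedly invertible, with the composite of its inverse and the off-diagonal block acting on $w_1$ growing at most linearly in $|\lambda|$. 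Since $A$ is unbounded, does not respect the splitting $\ran E\oplus\ker E$, and is only assumed dissipative, none of this is automatic: $\rho(E,AQ)\neq\emptyset$ gives invertibility of the full pencil, not of the algebraic block on its own, and dissipativity by itself does not exclude an unbounded or non-invertible kernel block without further interaction with the range block. You correctly flag this as ``the main obstacle'', but as written the single power of $|\lambda|$ --- which is exactly the statement that the index is at most $2$ --- is assumed rather than proved. The closing step is fine: granting the $w_2$ bound, the quadratic inequality does yield $\Vert w\Vert\leq C\frac{|\lambda|^2}{\Real\lambda}\Vert v\Vert$, which gives index at most $2$ on $(\omega,\infty)$ and at most $3$ on $\C_{\Real>\omega}$.
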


\begin{proof}
    Using Remark \ref{remark:1} and \cite[Thm.~3.3]{erbay_index_2024} one knows that $(EQ\inv, A)$ is a pH-DAE with an existing (complex) resolvent index, which is at most $2$ (at most $3$), and together with $(\lambda EQ\inv -A)\inv=Q(\lambda E-AQ)\inv$ for all $\lambda \in \rho(EQ\inv, A) = \rho(E,AQ)$ the assertion follows. 
\end{proof}

\begin{prop}
    Let $E$, $Q$ and $A$ be defined as before, such that $(E,AQ)$ defines a pH-DAE and assume, that there exists a $\omega>0$ with $\C_{\Real >\omega}\subseteq \rho(E,AQ)$. Then, for every $x_0 \in \ran((\mu E-AQ)\inv E)^{p_{\mathrm{c,res}}^{(E,AQ)}+2}$ there exists a unique solution $x\colon \R_{\geq 0}\to\X$ of the following differential algebraic equation
    \begin{align*}
        \begin{cases}
            \frac{d}{dt}Ex(t) &= AQx(t), \quad t\geq 0,\\
            x(0) &=x_0.
        \end{cases}
    \end{align*}
\end{prop}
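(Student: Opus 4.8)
The plan is to reduce the problem to a direct application of Theorem~\ref{thm:7}. The final proposition concerns the DAE $(E,AQ)$ with initial condition $x(0)=x_0$, and Theorem~\ref{thm:7} already establishes existence and uniqueness of a classical solution for $(\tilde E,\tilde A)$ whenever $\tilde x_0 \in \ran((\mu\tilde E-\tilde A)^{-1}\tilde E)^{p_{\rm c,res}^{(\tilde E,\tilde A)}+2}$, provided $(\tilde E,\tilde A)$ has a complex resolvent index. So the entire task is to verify that the hypotheses of Theorem~\ref{thm:7} hold for the pair $(\tilde E,\tilde A)=(E,AQ)$.

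First I would check that $(E,AQ)$ admits a complex resolvent index. This is exactly Proposition~\ref{res-leq-2}: under the standing pH-DAE assumptions together with the hypothesis $\C_{\Real>\omega}\subseteq\rho(E,AQ)$, the complex resolvent index $p_{\rm c,res}^{(E,AQ)}$ exists and is at most $3$. Hence the operator $A$ in Theorem~\ref{thm:7} is played by $AQ$, and the bounded operator $E$ of that theorem is the same $E$ here. The only genuine point to confirm is that $AQ$ satisfies the structural hypotheses imposed throughout Section~\ref{section-2}, namely that $AQ\colon\dom(AQ)\subseteq\X\to\Z$ is closed and densely defined. Since $A$ is closed and densely defined and $Q\in L(\X,\Z)$ is boundedly invertible, $AQ$ with $\dom(AQ)=Q^{-1}\dom(A)$ is closed and densely defined; this is a routine verification using that $Q$ and $Q^{-1}$ are bounded isomorphisms.

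With the complex resolvent index in hand, I would simply invoke Theorem~\ref{thm:7} applied to the pair $(E,AQ)$. The initial-condition set in that theorem is $\ran((\mu E-AQ)^{-1}E)^{p_{\rm c,res}^{(E,AQ)}+2}$, which is precisely the set appearing in the statement of the proposition. Thus for every such $x_0$ there is a unique classical solution $x\colon\R_{\geq 0}\to\X$ of $\frac{d}{dt}Ex(t)=AQx(t)$ with $x(0)=x_0$, and the proof is complete.

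I do not expect a serious obstacle here; the proposition is essentially a corollary that packages Proposition~\ref{res-leq-2} (existence of the index) together with Theorem~\ref{thm:7} (solvability given the index). The only care needed is the bookkeeping that identifies $AQ$ as a closed, densely defined operator to which the general theory of Section~\ref{section-2} applies, and the observation that $\rho(E,AQ)=\rho(EQ^{-1},A)$ and the associated resolvent identities (as already recorded in the proof of Proposition~\ref{res-leq-2}) make the resolvent-index bound transfer correctly to the pair $(E,AQ)$. Everything else is a direct citation.
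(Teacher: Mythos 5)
Your proposal is correct and matches the paper's own argument, which likewise proves this proposition as a direct consequence of Proposition~\ref{res-leq-2} (existence of the complex resolvent index for the pH-DAE) followed by an application of Theorem~\ref{thm:7}. The additional remark that $AQ$ is closed and densely defined because $Q$ is a bounded isomorphism is a sensible piece of bookkeeping that the paper leaves implicit.
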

\begin{proof}
    This is a direct consequence of Theorem \ref{thm:7} and Proposition \ref{res-leq-2}.
\end{proof}

The next goal is to examine the well-posedness of a pH-DAE on the whole domain, given that the radiality index exists. Let $(E,AQ)$ be such a pH-DAE. By Theorem \ref{thm:1} \eqref{eqn:dae-phs} is equivalent to
\begin{equation}\label{eqn:dae-phs-wform}
    \frac{d}{dt}\begin{bmatrix}
        I_{\Z^1}\inv & 0\\
        0 & E_2(AQ)_2\inv
    \end{bmatrix} \begin{bmatrix}
        z_1(t) \\ z_2(t)
    \end{bmatrix} = \begin{bmatrix}
        (AQ)_1E_1\inv & 0 \\
        0 & I_{\Z^2}
    \end{bmatrix} \begin{bmatrix}
        z_1(t) \\ z_2(t)
    \end{bmatrix}, \quad t\geq 0
\end{equation}
and \eqref{eqn:dae-phs2} is equivalent to
\begin{equation}\label{eqn:dae-phs2-wform}
    \begin{bmatrix}
        I_{\X^1}\inv & 0\\
        0 & (AQ)_2\inv E_2
    \end{bmatrix} \frac{d}{dt} \begin{bmatrix}
        x_1(t) \\ x_2(t)
    \end{bmatrix} = \begin{bmatrix}
        E_1\inv(AQ)_1 & 0 \\
        0 & I_{\X^2}
    \end{bmatrix} \begin{bmatrix}
        x_1(t) \\ x_2(t)
    \end{bmatrix}, \quad t\geq 0.
\end{equation}
The main problem with a transformation like that is, that \eqref{eqn:dae-phs-wform} and \eqref{eqn:dae-phs2-wform} are not necessarily pH-DAEs anymore. Thus, the resulting question is under which circumstances $(AQ)_1E_1\inv$ and $E_1\inv(AQ)_1$ generate a $C_0$-semigroup. A possible condition for that is $AQ$ being strongly $(E,p)$-radial \cite[Thm.~2.6.1]{sviridyuk_linear_2003}.

\begin{thm}\label{thm:4}
    Let $E$, $Q$ and $A$ be defined as before, such that $(E,AQ)$ defines a pH-DAE. Assume, that the radiality index exists and that $\C_{\Real \geq \omega}\subseteq \rho(E,AQ)$ for a $\omega >0$. Then, $(AQ)_1E_1\inv$ and $E_1\inv(AQ)_1$ generate an (at least) $p_{\rm c,res}^{(E,AQ)}+2$-times integrated semigroup. If additionally
    \begin{itemize}
        \item[a)] 
            $Q^\ast (\Z^1) = \X^1$, then $E_1\inv (AQ)_1$ generates a $C_0$-semigroup on $\X^1$.
        \item[b)] 
            $Q(\X^1)=\Z^1$, then $(AQ)_1 E_1\inv$ generates a $C_0$-semigroup on $\Z^1$. 
    \end{itemize}
\end{thm}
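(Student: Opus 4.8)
The statement splits into the integrated–semigroup claim and the two $C_0$-semigroup refinements, and I would treat them in that order.

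First I would dispose of the integrated–semigroup part purely by assembling earlier results. Since the radiality index exists and $\ran E$ is closed, Theorem \ref{thm:1} applies to the pair $(E,AQ)$ and produces the two equivalent Weierstraß representations \eqref{eqn:dae-phs-wform} and \eqref{eqn:dae-phs2-wform}; in these the operators $(AQ)_1E_1\inv$ on $\Z^1$ and $E_1\inv(AQ)_1$ on $\X^1$ are the respective first–block operators $A_1$, and the proof of Theorem \ref{thm:1} guarantees they are closed and densely defined. Because $\C_{\Real>\omega}\subseteq\rho(E,AQ)$, Proposition \ref{res-leq-2} shows the complex resolvent index $p_{\rm c,res}^{(E,AQ)}$ exists (and is at most $3$). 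Applying Theorem \ref{thm:3} to each of the two Weierstraß forms then yields at once that both $(AQ)_1E_1\inv$ and $E_1\inv(AQ)_1$ generate an (at least) $p_{\rm c,res}^{(E,AQ)}+2$-times integrated semigroup.

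For parts a) and b) the plan is to invoke the Lumer--Phillips theorem after re-weighting the inner product. Two structural facts set this up. First, by Theorem \ref{thm:1} the operator $E_1\colon\X^1\to\Z^1$ is a bounded isomorphism, so $E_1\inv(AQ)_1=E_1\inv\big[(AQ)_1E_1\inv\big]E_1$ and the two operators are similar through $E_1$. Second, by \eqref{eqn:res-eq} we have $\C_{\Real\geq\omega}\subseteq\rho(E,AQ)=\rho\big((AQ)_1E_1\inv\big)=\rho\big(E_1\inv(AQ)_1\big)$, so with $\lambda_0=\omega>0$ the range conditions $\ran(\lambda_0-(AQ)_1E_1\inv)=\Z^1$ and $\ran(\lambda_0-E_1\inv(AQ)_1)=\X^1$ hold automatically. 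Since both operators are densely defined, in each case it only remains to produce an equivalent inner product in which the operator is dissipative.

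For b), assuming $Q(\X^1)=\Z^1$, I would set $\Phi\coloneqq (Q|_{\X^1})E_1\inv$, which is a bounded operator from $\Z^1$ to $\Z^1$ precisely because $Q(\X^1)\subseteq\Z^1$. Writing a generic $z\in\Z^1$ as $z=E_1w$ with $w\in\X^1$ gives the clean identity $\Phi z=Qw$, and $\langle\Phi z,z'\rangle_\Z=\langle E^\ast Qw,w'\rangle_\X$ shows $\Phi$ is self-adjoint (using $E^\ast Q=Q^\ast E$) while $\langle\Phi z,z\rangle_\Z=\langle TEw,Ew\rangle_\Z\geq c\|z\|_\Z^2$ (from $E^\ast Q=E^\ast TE$ and $T\geq cI_\Z$ of Corollary \ref{cor:1}) shows $\Phi\geq cI$; hence $\langle\cdot,\cdot\rangle_\Phi\coloneqq\langle\Phi\,\cdot,\cdot\rangle_\Z$ is an inner product on $\Z^1$ equivalent to $\|\cdot\|_\Z$. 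Moving $\Phi$ onto the second slot, for $z=E_1w$ in the domain I get $\langle(AQ)_1E_1\inv z,z\rangle_\Phi=\langle AQw,Qw\rangle_\Z$, whose real part is nonpositive because $A$ is dissipative and $Qw\in\dom(A)$. With the range condition and density from the previous paragraph, Lumer--Phillips gives a contraction semigroup for $\langle\cdot,\cdot\rangle_\Phi$, hence a $C_0$-semigroup on $\Z^1$ in the original norm. Part a) is the mirror image: assuming $Q^\ast(\Z^1)=\X^1$, the operator $\Psi\coloneqq (E^\ast Q)|_{\X^1}=(Q^\ast E)|_{\X^1}$ maps $\X^1$ into itself (since $E_1(\X^1)=\Z^1$ and $Q^\ast(\Z^1)=\X^1$), is self-adjoint by \eqref{eqn:dae-eq=qe}, and satisfies $\Psi\geq c'I$ on $\X^1$ by the same estimate together with $E_1$ being bounded below; for $w$ in the domain one computes $\langle E_1\inv(AQ)_1w,w\rangle_\Psi=\langle AQw,Qw\rangle_\Z$, and Lumer--Phillips again produces the $C_0$-semigroup on $\X^1$. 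The main obstacle is not the dissipativity itself --- which collapses in one line to the dissipativity of $A$ applied to $Qw$ --- but the identification of the correct weights $\Phi$ and $\Psi$ and the verification that conditions b) and a) are exactly what force these weights to leave $\Z^1$, respectively $\X^1$, invariant and to be positive definite there; this invariance is the crux of the argument.
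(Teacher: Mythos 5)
Your proposal is correct and follows essentially the same route as the paper: Weierstraß form plus Proposition \ref{res-leq-2} and Theorem \ref{thm:3} for the integrated-semigroup claim, then Lumer--Phillips in the re-weighted inner products $\langle (Q^\ast E)|_{\X^1}\cdot,\cdot\rangle$ on $\X^1$ and $\langle QE_1\inv\cdot,\cdot\rangle$ on $\Z^1$, with the range condition supplied by the resolvent set. The only (harmless) difference is that you establish coercivity and self-adjointness of the weights directly from the factorization $E^\ast Q=E^\ast TE$ of Corollary \ref{cor:1} and the bounded invertibility of $E_1$, which is a bit more streamlined than the paper's detour through the surjectivity of $Q^\ast E\pm\i I$ and Proposition \ref{BTB=B} applied to the inverse.
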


\begin{proof}
    Let $x\in \X$ and $\lambda\in\rho(E,AQ)$ with $\lambda>0$. Thus, $(\lambda E-AQ)\inv \in L(\Z, \X)$. As seen in the proof of Theorem \ref{thm:1} $(AQ)_1E_1\inv$ and $E_1\inv (AQ)_1$ are densely defined and by Proposition \ref{res-leq-2} the complex resolvent index $p_{\rm c,res}^{(E,AQ)}$ exists. Thus, by Theorem \ref{thm:3} $(I_{\Z^1}, (AQ)_1E_1\inv)$ and $(I_{\X^1}, E_1\inv (AQ)_1)$ generate an (at least) $p_{\rm c, res}^{(E,AQ)}+2$-times integrated semigroup.

    In order to show a), assume that $Q^\ast (\Z^1) = Q^\ast (E(\X^1))= \X^1$ holds. Thus, $Q^\ast E\colon \X^1\to\X^1$ is non-negative and self-adjoint. In fact, since $Q^\ast E$ is self-adjoint the mappings $Q^\ast E \pm \i I_\X$ are surjective. Hence, for every $z\in\X^1$ there are $x_\pm = x_\pm^1+x_\pm^2 \in\X=\X^1\oplus\X^2$ with $(Q^\ast E\pm \i I_\X)x_\pm = z$. Together with $Q^\ast E(\X^1)=\X^1$ one has $PQ^\ast E x_\pm = PQ^\ast E x_\pm^1 + PQ^\ast Ex_\pm^2 = Q^\ast E Px_\pm^1 = Q^\ast E Px_\pm$ and
    \begin{align*}
        z &= Pz\\
        &= PQ^\ast E x_\pm \pm \i Px_\pm \\
        &= Q^\ast E Px_\pm \pm \i Px_\pm = (Q^\ast E \pm \i I_{\X^1}) x_\pm. 
    \end{align*}
    Thus, $Q^\ast E \pm \i I_{\X^1}\colon \X^1\to\X^1$ are surjective and consequently, as a closed operator, $(Q^\ast E)\vert_{\X^1}$ is self-adjoint. Furthermore, since $Q^\ast$ and $E_1$ are invertible and $(Q^\ast E)\vert_{\X^1}$ is self-adjoint, $(Q^\ast E)\vert_{\X^1}$ becomes invertible and $((Q^\ast E)\vert_{\X^1})\inv$ self-adjoint as well. Hence, using Lemma \ref{BTB=B} for $B=((Q^\ast E)\vert_{\X^1})\inv$ there exists a $c>0$, such that $T=((Q^\ast E)\vert_{\X^1})\geq cI_{\X^1}$ and $\langle \cdot, \cdot\rangle_{\tilde \X^1}\coloneqq \langle (Q^\ast E)\vert_{\X^1} \cdot ,\cdot\rangle$ induces a norm equivalent to $\Vert \cdot\Vert_\X$ on $\X^1$. Define $\tilde \X^1\coloneqq (\X^1, \langle \cdot, \cdot\rangle_{\tilde\X^1})$. Since $A$ is dissipative one derives
    \begin{equation*}
        \Real \langle E_1\inv(AQ)_1 x, x\rangle_{\tilde\X^1} 
        = \Real \langle AQx, Qx\rangle_\X
        \leq 0, \quad x\in \dom(E_1\inv (AQ)_1),
    \end{equation*}
    which means that $E_1\inv(AQ)_1$ is dissipative in $\tilde \X^1$. Given that $\X=\X^1\oplus\X^2$, $E_1$ is bijective and $(\lambda E_1 -(AQ)_1)$ is surjective, $(\lambda I_{\X^1}-E_1\inv(AQ)_1) = E_1\inv(\lambda E_1-(AQ)_1)\colon\dom((AQ)_1)\subseteq \X^1\to\X^1$ becomes surjective as well.
    By the Theorem of Lumer-Phillips \cite[Thm.3.4.5]{arendt_vector-valued_2011} $E_1\inv(AQ)_1$ generates a contraction semigroup on $\tilde\X^1$. In this case, $E_1\inv(AQ)_1$ also generates a $C_0$-semigroup on $\X^1$.

    To show b) assume, that $Q(\X^1)=Q(E\inv (\Z^1))=\Z^1$ holds. As seen before it is possible to show, that $QE\inv$ is non-negative, self-adjoint and has a non-negative and self-adjoint inverse, such that $QE\inv\geq cI_{\X}$ for a $c>0$. Define $\langle\cdot,\cdot\rangle_{\tilde\Z^1}\coloneqq \langle QE\inv \cdot, \cdot\rangle$ and $\tilde\Z^1 \coloneqq (\Z^1, \langle \cdot,\cdot\rangle_{\tilde\Z^1})$. Thus,
    \begin{align*}
        \Real \langle (AQ)_1E_1\inv z, z \rangle_{\tilde\Z^1} = \Real\langle A QE_1 z, QE_1 z\rangle \leq 0,\quad z\in\dom((AQ)_1E_1\inv)
    \end{align*}
    since $A$ is dissipative. Then, the rest of this proof follows the previous part.
\end{proof}

\begin{remark}
    Let $\X=\Z$ and $Q=I_\X$ and assume that $E$ commutes with $A$ on $\dom(A)$. Thus
    \begin{equation*}
        Ex=E(\lambda E-A)(\lambda E-A)\inv x = (\lambda E-A)E (\lambda E-A)\inv x
    \end{equation*}
    or equivalently $(\lambda E-A)\inv Ex=E(\lambda E-A)\inv x$. Thus, the left- and right-$E$ resolvent coincide and $\X^1=\Z^1$, $\X^2=\Z^2$ as well as the projections from the projections from Theorem \ref{thm:1} $P=R$. In this case $Q^\ast E(\X^1)=\X^1$ and $QE\inv(\Z^1)=\Z^1$ hold. Unfortunately, the assumption that $E$ and $AQ$ commute massively limits the choice of systems. Because, if one considers the following type of system
    \begin{align*}
        \frac{d}{dt} \underbrace{\begin{bmatrix}
            I & 0\\
            0 & N
        \end{bmatrix}}_{\eqqcolon E} \begin{bmatrix}
            x_1(t) \\ x_2(t)
        \end{bmatrix} = \underbrace{\begin{bmatrix}
            A_1 & A_2 \\
            A_3 & A_4
        \end{bmatrix}}_{\eqqcolon A} \begin{bmatrix}
            x_1(t) \\ x_2(t)
        \end{bmatrix}, \quad t\geq 0,
    \end{align*}
    and additionally assume that $EA=AE$ hold, then $A_2=A_2N$, $NA_3=A_3$ and $NA_4=A_4N$.
\end{remark}

\begin{ex} 
    Consider the system
    \begin{equation*}
        \frac{d}{dt}\underbrace{\begin{bmatrix}
            I & 0\\
            0 & 0
        \end{bmatrix}}_{=\mathcal{E}} \begin{bmatrix}
            x_1(t)\\ x_2(t)
        \end{bmatrix} = \underbrace{\begin{bmatrix}
            A_1 & A_2\\
            A_3 & A_4
        \end{bmatrix}}_{=\mathcal{A}} \begin{bmatrix}
            x_1(t)\\ x_2(t)
        \end{bmatrix}
    \end{equation*}
    with existing radiality index and $\mathcal{A}$ being dissipative.\\ 
    Since $\ker E\subseteq \X^2$ and $\X=\X^1\oplus\X^2$, one has $\mathcal{E}_1=I_{\X^1}$. Thus, one has $\mathcal{E}(\X^1)= \X^1 = \Z^1 = \mathcal{E}_1(\Z^1)$. But, in this case it is even possible to say directly something about the generation of $C_0$-semigroups without using Theorem \ref{thm:4}. Because in such a case $\mathcal{A}_1 = \mathcal{A}_1\mathcal{E}_1\inv = \mathcal{E}_1\inv\mathcal{A}_1$ is dissipative (as a restriction of a dissipative operator) and $\lambda I_{\X^1}-\mathcal{A}_1$ is surjective (since $(\omega,\infty)\subseteq\rho(\mathcal{E},\mathcal{A})=\rho(I_{\X^1}, \mathcal{A}_1)$). Hence, using the Lumer-Phillips Theorem one obtains the same outcome. This result is similar to \cite[Section IV]{jacob_solvability_2022}.
\end{ex}

\begin{ex} 
   Recall the system from \cite[Ex.~3.4]{erbay_index_2024}. Define $A={\rm diag\,} (A_0, A_1, A_2,\ldots)$ with 
    \begin{align*}
        A_0=\bem 0 & -1\\ 1& 0\enm,\quad A_k=\bem 0 & \sqrt{k^4+1} \\ -\sqrt{k^4+1} & -2\enm, \quad k\in \N,
    \end{align*}
    and $\dom(A)\coloneqq \set{x\in \ell^2 \mid Ax\in\ell^2}$. Then $A$ can be extended to $\ell^2$, which will denoted by $A_{-1}$. Define $E \in L(\ell^2)$, $B\colon\R\to \dom(A^\ast)'$ and $C\colon \dom(A)\to\R$ with $E={\rm diag\,}(E_0, E_1, E_2,\ldots)$ and $B=(B_0, B_1, B_2, \ldots)^T = C^\ast$, where
    \begin{align*}
        & & E_0 &= \bem 1 & 0\\ 0 & 0\enm, & E_k &= \bem 1 & 0 \\ 0 & 1\enm, \quad k\in\N, & & \\
        & & B_0 &= \bem 0\\ 1\enm, & B_k &=\bem 0\\ k^{\frac{5}{4}}\enm, \quad k\in\N. & & 
    \end{align*}
    Define then the system
    \begin{align*}
        \frac{{\rm d}}{{\rm d}t}\underbrace{\bem E & 0 & 0\\ 0 & 0 & 0\\ 0 & 0 & 0\enm}_{\mathcal{E}} \bem x_1\\ x_2\\ x_3\enm = \underbrace{\bem A_{-1} & B & 0\\ -C & 0 & I\\ 0 & -I & 0\enm}_{\mathcal{A}} \bem x_1\\ x_2 \\ x_3 \enm
    \end{align*}
    on $\X=\ell^2\times \R\times \R$. Obviously, $\mathcal{E}$ is non-negative and self-adjoint and, by its construction, $\mathcal{A}$ with maximal domain is dissipative. Furthermore, for $\lambda\in\rho(\mathcal{E}, \mathcal{A})$ 
    \begin{equation*}
        (\lambda\mathcal{E}-\mathcal{A})\inv = \bem (\lambda E-A)\inv & 0 & (\lambda E-A)\inv B \\ 0 & 0 & I \\ -C(\lambda E-A)\inv & -I & C(\lambda E-A)\inv B\enm.
    \end{equation*}
    It was shown in \cite[Ex.~3.4]{erbay_index_2024} that this system has real resolvent index $2$ and complex resolvent index $3$.
    
   We now compute the radiality index of $(\mathcal{E}, \mathcal{A})$. First, calculate
    \begin{align*}
        (\lambda\mathcal{E}-\mathcal{A})\inv \mathcal{E} &= \bem (\lambda E-A)\inv E & 0 & 0 \\ 0 & 0 & 0 \\ -C(\lambda E-A)\inv E & 0 & 0\enm, \\
        \mathcal{E} (\lambda\mathcal{E}-\mathcal{A})\inv &= \bem E(\lambda E-A)\inv & 0 & E(\lambda E-A)\inv B \\ 0 & 0 & 0 \\ 0 & 0 & 0 \enm.
    \end{align*}
    It is needed to determine the growth rate of $(\lambda E-A)\inv = {\rm diag}(M_0(\lambda ), M_1(\lambda ),$ $ M_2(\lambda ),\ldots)$ with
    \begin{equation*}
         M_0(\lambda ) = \bem 0 & -1\\ 1& \lambda \enm, \quad M_k(\lambda ) = \frac{1}{\lambda (\lambda +2)+k^4+1}\bem \lambda +2 & \sqrt{k^4+1} \\ -\sqrt{k^4+1} & \lambda \enm, \quad k\in\N.
    \end{equation*}
    It is easy to see that $M_0(\lambda )$ has a linear growth and $M_k(\lambda )$ is decreasing with rate $\frac{1}{\lambda }$. Hence, due to the structure of $E$, $(\lambda E-A)\inv E$ and $E(\lambda E-A)\inv$ are bounded and
    \begin{equation*}
        \left\Vert (\lambda E-A)\inv E (\mu E-A)\inv E \right\Vert = \left\Vert E(\lambda E-A)\inv E (\mu E-A)\inv \right\Vert \leq \frac{C}{\lambda \mu}, \quad \lambda, \mu>0 
    \end{equation*}
    for a given $C>0$. Moreover, by simple calculations its is easy to show that $B_k^T M_k(\lambda)$ and $M_k(\lambda)B_k$ are still decreasing with rate $\frac{1}{\lambda}$. Thus, for all $\lambda, \mu >0$
    \begin{align*}
        \left\Vert C(\lambda E-A)\inv E (\mu E-A)\inv E \right\Vert &\leq \frac{C}{\lambda \mu}, \\
        \left\Vert E(\lambda E-A)\inv E (\mu E-A)\inv B \right\Vert &\leq \frac{C}{\lambda \mu}.
    \end{align*}
    Since 
    \begin{align*}
         (\lambda\mathcal{E}-\mathcal{A})\inv \mathcal{E} (\mu\mathcal{E}-\mathcal{A})\inv \mathcal{E} &= \bem (\lambda E-A)\inv E(\mu E-A)\inv E & 0 & 0 \\ 0 & 0 & 0 \\ -C(\lambda E-A)\inv E(\mu E-A)\inv E & 0 & 0\enm, \\
         \mathcal{E} (\lambda\mathcal{E}-\mathcal{A})\inv \mathcal{E} (\mu\mathcal{E}-\mathcal{A})\inv &= \bem E(\lambda E-A)\inv E(\mu E-A)\inv & 0 & E(\lambda E-A)\inv E(\mu E-A)\inv B \\ 0 & 0 & 0 \\ 0 & 0 & 0 \enm
    \end{align*}
    the system $(\mathcal{E}, \mathcal{A})$ has radiality index $1$.
\end{ex} 



\section*{Acknowledgements}
The financial support of NSERC (Canada) for the research described in this paper is gratefully acknowledged. 
The authors would like to thank Hannes Gernandt and Timo Reis for valuable discussions.\\

\noindent
\textbf{Data Availability} No data was used for the research described in the article.

\section*{Declarations}

\noindent
\textbf{Conflicts of interest} On behalf of all authors, the corresponding author states that there is no conflict of interest.


\end{document}